\theoremstyle{plan}
\newtheorem{theorem}{Theorem}[section]
\newtheorem{lemma}[theorem]{Lemma}
\newtheorem{corollary}[theorem]{Corollary}
\newtheorem{proposition}[theorem]{Proposition}
\theoremstyle{definition}
\newtheorem{definition}[theorem]{Definition}
\newtheorem{question}[theorem]{Question}
\newtheorem{conjecture}[theorem]{Conjecture}
\let\le\leqslant
\let\ge\geqslant
\def\supp{\operatorname{supp}}
\def\Lk{{\operatorname{Lk}}}
\begin{document}

\title[Embeddability of RAAGs on complements of trees]
   {Embeddability of right-angled Artin groups\\ on complements of trees}

\author{Eon-Kyung Lee and Sang-Jin Lee}
\address{Department of Mathematics, Sejong University, Seoul, Korea}
\email{eonkyung@sejong.ac.kr}

\address{Department of Mathematics, Konkuk University, Seoul, Korea;
School of Mathematics, Korea Institute for Advanced Study, Seoul, Korea}
\email{sangjin@konkuk.ac.kr}
\date{\today}

\begin{abstract}
For a finite simplicial graph $\Gamma$, let $A(\Gamma)$ denote the right-angled
Artin group on $\Gamma$.
Recently Kim and Koberda introduced the extension graph $\Gamma^e$
for $\Gamma$, and established the Extension Graph Theorem:
for finite simplicial graphs $\Gamma_1$ and $\Gamma_2$
if $\Gamma_1$ embeds into $\Gamma_2^e$ as an induced subgraph
then $A(\Gamma_1)$ embeds into $A(\Gamma_2)$.
In this article we show that the converse of this theorem
does not hold for the case $\Gamma_1$ is the complement of a tree
and for the case  $\Gamma_2$ is the complement of a path graph.

\medskip\noindent
{\em Keywords\/}:
right-angled Artin groups, extension graphs, embeddability.\\
{\em 2010 Mathematics Subject Classification\/}: Primary 20F65; Secondary 05C25
\end{abstract}

\maketitle

\section{Introduction}\label{sec:intro}

Throughout this article all graphs are assumed to be simplicial and undirected.

Let $\Gamma$ be a finite   graph with vertex set $V(\Gamma)$ and edge set $E(\Gamma)$.

For a subset $A\subset V(\Gamma)$, the subgraph $\Gamma_1$ of $\Gamma$ with $V(\Gamma_1)=A$ and
$E(\Gamma_1)=\{\{a,b\}\in E(\Gamma) :a,b\in A\}$
is called the \emph{subgraph of\/ $\Gamma$ induced by $A$}
or the \emph{induced subgraph of\/ $\Gamma$ on $A$}.
If a graph $\Gamma_1$ embeds into $\Gamma$ as an induced subgraph,
we write $\Gamma_1\le\Gamma$.
The \emph{complement graph} of $\Gamma$, denoted by $\bar\Gamma$,
is the graph such that $V(\bar\Gamma)=V(\Gamma)$ and two vertices
are adjacent in $\bar\Gamma$ if and only if they are not adjacent in $\Gamma$.
If a group $H$ embeds into a group $G$, we write $H\le G$.
For elements $g,h$ of a group, let $g^h$ and $[g,h]$ denote the conjugate $h^{-1}gh$
and the commutator $g^{-1}h^{-1}gh$, respectively.

The \emph{right-angled Artin group} $A(\Gamma)$ on $\Gamma$
is defined by the presentation
$$ A(\Gamma)=\langle\, v\in V(\Gamma)\mid [a,b]=1\
\mbox{if $\{a,b\}\in E(\Gamma)$}\,\rangle.
$$
It is well-known that two right-angled Artin groups
$A(\Gamma_1)$ and $A(\Gamma_2)$ are isomorphic as groups
if and only if $\Gamma_1$ and $\Gamma_2$ are isomorphic
as graphs~\cite{Dro87} and that $\Gamma_1\le\Gamma_2$ implies $A(\Gamma_1)\le A(\Gamma_2)$.

\subsection{Embeddability between right-angled Artin groups}

The following is a fundamental question for right-angled Artin groups~\cite{CSS08,KK13}.

\begin{question}[Embeddability Problem]\label{qn:embed}
Is there an algorithm to decide whether or not there exists an embedding
between two given right-angled Artin groups?
\end{question}

Kim and Koberda~\cite{KK13} introduced the notion of extension graph $\Gamma^e$
which is obtained from $\Gamma$ through a combinatorial procedure,
and developed the Extension Graph Theorem.

\begin{definition}[Extension graph]
For a finite   graph $\Gamma$, the \emph{extension graph} of $\Gamma$
is the graph $\Gamma^e$ such that
the vertices of $\Gamma^e$ are in one-to-one correspondence with the conjugates
of vertices of $\Gamma$ in $A(\Gamma)$, that is,
$$V(\Gamma^e)=\{\, a^g\in A(\Gamma): a\in V(\Gamma),\ g\in A(\Gamma)\,\}
$$
and two vertices of $\Gamma^e$ are adjacent
if and only if they commute in $A(\Gamma)$, that is,
$$
E(\Gamma^e)=\{\,\{a^g,b^h\}: a^g,b^h\in V(\Gamma^e),\
\mbox{$[a^g,b^h]=1$ in $A(\Gamma)$}\,\}.
$$
\end{definition}

Extension graphs are usually infinite and locally infinite.

\begin{theorem}[Extension Graph Theorem~\cite{KK13}]\label{thm:EGT}
For finite   graphs $\Gamma_1$ and $\Gamma_2$,
if\/ $\Gamma_1\le\Gamma_2^e$ then $A(\Gamma_1)\le A(\Gamma_2)$.
\end{theorem}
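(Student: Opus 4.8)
The plan is to make the hypothesis explicit and then write down the obvious candidate embedding. Unwinding $\Gamma_1\le\Gamma_2^e$ means that there are vertices $a_1^{g_1},\dots,a_n^{g_n}$ of $\Gamma_2^e$ (so each $a_i\in V(\Gamma_2)$ and $g_i\in A(\Gamma_2)$) whose induced subgraph in $\Gamma_2^e$ is isomorphic to $\Gamma_1$; fix a graph isomorphism sending the $i$-th vertex $v_i$ of $\Gamma_1$ to $a_i^{g_i}$. Define $\phi\colon A(\Gamma_1)\to A(\Gamma_2)$ by $\phi(v_i)=a_i^{g_i}$. This $\phi$ is a well-defined homomorphism: the defining relators of $A(\Gamma_1)$ are the commutators $[v_i,v_j]$ with $\{v_i,v_j\}\in E(\Gamma_1)$, and by the definition of $\Gamma_2^e$ an edge $\{v_i,v_j\}$ of $\Gamma_1$ corresponds to a commuting pair $a_i^{g_i},a_j^{g_j}$, so every relator maps to $1$. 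This step is routine; the whole content of the theorem is the \emph{injectivity} of $\phi$.

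To prove injectivity I would first reduce to a purely finite combinatorial statement: it suffices to show that for any finite set $S=\{a_1^{g_1},\dots,a_n^{g_n}\}$ of vertices of $\Gamma_2^e$ the natural surjection $A(\Delta)\to\langle S\rangle$ onto the subgroup they generate is an isomorphism, where $\Delta$ is the induced subgraph on $S$. Because simultaneous conjugation by a fixed $h\in A(\Gamma_2)$ carries $S$ to $S^h=\{a_i^{g_i h}\}$ by an automorphism of $A(\Gamma_2)$, it preserves both $\Delta$ and the isomorphism type of $\langle S\rangle$; hence we are free to replace each conjugator $g_i$ by $g_i h$. I would then induct on a complexity $\sum_i\ell(g_i)$, where $\ell(g_i)$ is the minimal word length over the coset $C(a_i)g_i$ (with $C(a_i)$ the centraliser of $a_i$), so that the measure depends only on the vertices $a_i^{g_i}$ and not on the chosen representatives. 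In the base case all conjugators are trivial, $S$ is literally a subset of $V(\Gamma_2)$ inducing $\Delta$, so $\Delta\le\Gamma_2$ and the conclusion is exactly the elementary implication $\Delta\le\Gamma_2\Rightarrow A(\Delta)\le A(\Gamma_2)$ recalled in the introduction.

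For the inductive step I would exploit the visual splittings of $A(\Gamma_2)$ along vertex stars. For a non-cone vertex $s\in V(\Gamma_2)$ one has the amalgamated decomposition
\[
A(\Gamma_2)=A(\Gamma_2\setminus\{s\})\ast_{A(\Lk(s))}\bigl(\langle s\rangle\times A(\Lk(s))\bigr),
\]
with a direct-product splitting $\langle s\rangle\times A(\Gamma_2\setminus\{s\})$ in the cone case, so that $A(\Gamma_2)$ acts on the associated Bass--Serre tree $T_s$. Choosing $s$ to be a letter occurring in a conjugator of maximal length, I would analyse how each $a_i^{g_i}$ sits with respect to $T_s$---whether it is elliptic or hyperbolic, and where its fixed set or axis lies---and use this to produce a single element $h$, built from $s$ and from $A(\Lk(s))$, for which $\sum_i\ell(g_i h)<\sum_i\ell(g_i)$. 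Since conjugation by $h$ preserves $\Delta$, the inductive hypothesis applies to $S^h$ and yields the isomorphism for $S$.

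The main obstacle is precisely this simultaneous reduction: conjugating by a single $h$ may shorten some conjugators while lengthening others, so the crux is to show that $T_s$ organises the configuration well enough that an $h$ decreasing the \emph{total} complexity always exists, and that the generated subgroup keeps the right presentation throughout (that the induced subgraph on $S^h$ is still $\Delta$ is automatic, but one needs the sharper fact that $A(\Delta)\to\langle S\rangle$ stays injective). The delicate case is that of the elliptic elements fixing edges of $T_s$---the generators whose conjugates already lie in the amalgamated factor $A(\Lk(s))$ and which interact with $s$---where the careful Bass--Serre bookkeeping, and hence the real work, is concentrated.
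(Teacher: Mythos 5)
This theorem is stated in the paper as a quotation from Kim--Koberda \cite{KK13} with no proof given, so your sketch has to be measured against the argument in the literature. Your very first reduction is false: for a finite set $S=\{a_1^{g_1},\dots,a_n^{g_n}\}\subset V(\Gamma_2^e)$ inducing $\Delta$, the tautological surjection $A(\Delta)\to\langle S\rangle$ need not be injective. Take $\Gamma_2$ to be two non-adjacent vertices $a,b$, so that $A(\Gamma_2)=F_2$ and $\Gamma_2^e$ is an infinite edgeless graph, and take $S=\{a,\,b,\,a^b\}$. Then $\Delta$ is the edgeless graph on three vertices, so $A(\Delta)=F_3$, while $\langle S\rangle=\langle a,b,b^{-1}ab\rangle=F_2$, and a surjection $F_3\to F_2$ has nontrivial kernel (explicitly, $z^{-1}y^{-1}xy\mapsto 1$ for the generators $x,y,z\mapsto a,b,a^b$). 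The theorem only asserts that \emph{some} embedding $A(\Delta)\le A(\Gamma_2)$ exists --- here $\{a,a^b,a^{b^2}\}$ furnishes one --- not that the given conjugates generate a copy of $A(\Delta)$; any correct proof must at some point re-choose the conjugators, which your formulation forbids.

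Independently of this, your induction cannot terminate. Simultaneous conjugation by $h$ is an automorphism of $A(\Gamma_2)$ inducing a graph automorphism of $\Gamma_2^e$, so if some $S^h$ lay inside $V(\Gamma_2)$ (your base case) you would conclude $\Delta\le\Gamma_2$. But the theorem has content precisely because $\Gamma_2^e$ contains finite induced subgraphs that $\Gamma_2$ does not (the example above already shows this); for such $\Delta$ no sequence of conjugations reaches the base case, so the claim that an $h$ with $\sum_i\ell(g_ih)<\sum_i\ell(g_i)$ always exists is false, and no amount of Bass--Serre bookkeeping in the deferred step can repair it. Conjugating a configuration of vertices of the extension graph back into $V(\Gamma)$ is legitimate only when those vertices pairwise commute --- that is exactly Lemma~\ref{lem:ind} and Corollary~\ref{lem:indep} of the present paper, and it is the special case your strategy illegitimately extrapolates from. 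The actual proof in \cite{KK13} runs in the opposite direction: $\Gamma^e$ is exhausted by finite graphs obtained from $\Gamma$ by iterated doubling along stars of vertices, and one shows that the RAAG on such a double embeds into the RAAG on the graph being doubled, using the splitting $A(D_v(\Lambda))\cong A(\Lambda)*_{A(\operatorname{St}(v))}A(\Lambda)$; the embedding so produced is not the tautological map on the chosen conjugates.
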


This theorem was a significant progress toward solving the Embeddability Problem.
Recently, Casals-Ruiz showed the following.

\begin{theorem}[Theorem 3.5 in \cite{Cas15}]\label{thm:EGE}
There exists an algorithm that given two finite   graphs $\Gamma_1$ and $\Gamma_2$
decides whether or not $\Gamma_1$ embeds into $\Gamma_2^e$.
\end{theorem}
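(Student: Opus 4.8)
The plan is to convert the a priori infinite search for an induced copy of $\Gamma_1$ inside the infinite, locally infinite graph $\Gamma_2^e$ into a finite one, by exploiting the recursive structure of extension graphs. The starting point is the structural description of $\Gamma^e$ due to Kim and Koberda: $\Gamma^e$ is assembled from copies of $\Gamma$, glued together along star subgraphs $\operatorname{st}(v)$, and these copies are organized along a tree $T$ in such a way that adjacency of two vertices of $\Gamma^e$ is detected \emph{locally}, i.e.\ it depends only on the finite segment of $T$ joining the copies that contain them. First I would make this precise: attach to each vertex $a^g\in V(\Gamma_2^e)$ a reduced normal form for $g$ together with a ``block'' $B(g)\subset T$, so that the relation $[a^g,b^h]=1$ is decidable from the pair $(a,b)$ and the bounded combinatorial data relating $B(g)$ and $B(h)$.

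Next I would define the finite approximations to be searched. For $r\ge 0$ let $\Gamma_2^e(r)$ be the induced subgraph of $\Gamma_2^e$ on those conjugates $a^g$ with $a\in V(\Gamma_2)$ whose block lies within distance $r$ of the base block in $T$ (equivalently, $g$ of reduced syllable length at most $r$). Each $\Gamma_2^e(r)$ is a finite graph, computable from $\Gamma_2$ and $r$, and $\Gamma_2^e=\bigcup_{r}\Gamma_2^e(r)$. The central claim is a computable bound: there is $R=R(\Gamma_1,\Gamma_2)$ such that $\Gamma_1\le\Gamma_2^e$ if and only if $\Gamma_1\le\Gamma_2^e(R)$. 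Granting this, the algorithm is immediate: compute the finite graph $\Gamma_2^e(R)$ and test for an induced copy of $\Gamma_1$, a decidable finite problem.

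Establishing the bound $R$ is the crux and the main obstacle. The idea is a pumping argument on the tree $T$. Suppose $\phi\colon\Gamma_1\to\Gamma_2^e$ is an induced embedding whose image is spread over a large subtree of $T$. Since $\Gamma_1$ has only $n:=|V(\Gamma_1)|$ vertices, the convex hull in $T$ of the blocks carrying $\phi(V(\Gamma_1))$ is a tree with at most $n$ leaves; if it is large it must contain a long path of degree-two vertices along which the incident blocks carry no image vertex. Because adjacency in $\Gamma_2^e$ is local on $T$ and the gluing is by the star operation, such a sterile segment can be contracted, folding the two sides together along a repeated block type (found by pigeonhole once the segment is longer than the number of distinct block types, a quantity bounded in terms of $\Gamma_2$), without changing which pairs of image vertices commute and which do not. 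This yields a new induced embedding into a strictly smaller subtree, and iterating bounds the necessary subtree, hence the conjugator lengths, by a computable $R(\Gamma_1,\Gamma_2)$. The delicate points, which I expect to absorb most of the work, are verifying that the fold preserves \emph{non}-adjacencies (so that the image remains induced, not merely a homomorphic image) and that the set of block types together with the local adjacency rule is genuinely finite and computable; both should follow from the finiteness of $\Gamma_2$ and the star-gluing description of $\Gamma_2^e$.

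Finally I would assemble the pieces: decidability of induced-subgraph isomorphism between finite graphs, computability of $\Gamma_2^e(R)$ from $\Gamma_2$ and $R$, and the bound $R$ itself combine to give the desired algorithm.
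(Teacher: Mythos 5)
First, a point of comparison: the paper does not prove this statement at all. It is imported verbatim from Casals-Ruiz \cite{Cas15} (Theorem 3.5 there) and used as a black box, so there is no internal proof to measure your attempt against; the only thing one can assess is whether your sketch would constitute an independent proof.

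On its own terms, your architecture is the standard one (and essentially that of \cite{KK13} and \cite{Cas15}): realize $\Gamma_2^e$ as an increasing union of finite computable graphs $\Gamma_2^e(r)$, obtained from $\Gamma_2$ by iterated doubling along stars (equivalently, by bounding the syllable length of conjugators), and reduce decidability to a computable bound $R(\Gamma_1,\Gamma_2)$ with $\Gamma_1\le\Gamma_2^e$ if and only if $\Gamma_1\le\Gamma_2^e(R)$. The difficulty is that this bound \emph{is} the theorem, and your argument for it is not yet a proof. Two concrete obstacles. First, because copies of $\Gamma_2$ are glued along entire stars, a single vertex of $\Gamma_2^e$ lies in an infinite subtree of copies (already the unconjugated vertex $a$ lies in both halves of every double along a star containing $a$, and this propagates under iteration); so the convex hull of the images of $V(\Gamma_1)$ is not a finite tree with at most $|V(\Gamma_1)|$ leaves, and the ``long sterile segment'' you want to locate by pigeonhole need not exist in the form you describe. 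Second, even granting such a segment, the folding must simultaneously preserve every non-adjacency: whether $[a^g,b^h]=1$ is governed by the supports of the conjugators (cf.\ Lemmas~\ref{lem:comm1} and~\ref{lem:comm2}), and collapsing a segment of the tree alters those supports for all image vertices whose conjugators cross it, so new commutations can in principle be created, destroying inducedness. You flag both issues yourself, but they are precisely where the content of the theorem lives. Without resolving them, what you have established is only semi-decidability (enumerate $r$ and search $\Gamma_2^e(r)$ for an induced copy of $\Gamma_1$), which already follows from the Kim--Koberda description of $\Gamma^e$; the hard direction, certifying \emph{non}-embeddability after finitely many steps, remains open in your write-up.
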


Due to Theorems~\ref{thm:EGT} and~\ref{thm:EGE},
the Embeddability Problem is solvable for the class of right-angled Artin groups
for which the converse of the Extension Graph Theorem holds.
It is natural to ask for which graphs this converse holds.

\begin{question}[Question 1.5 in~\cite{KK13}]\label{qn:ECG}
For which graphs $\Gamma_1$ and $\Gamma_2$,  do we have
$A(\Gamma_1)\le A(\Gamma_2)$
if and only if $\Gamma_1\le\Gamma_2^e\,$?
\end{question}

If a graph $\Lambda$ does not embed into a graph $\Gamma$ as an induced subgraph,
we say that $\Gamma$ is \emph{$\Lambda$-free}.
Let $P_n$ and $C_n$ denote the path graph and the cycle on $n$ vertices, respectively.

It is known that the converse of the Extension Graph Theorem holds,
hence $A(\Gamma_1)\le A(\Gamma_2)$ if and only if $\Gamma_1\le\Gamma_2^e$,
for some important classes of graphs:

\begin{enumerate}
\item[(i)] $\Gamma_1$ is a forest
\cite[Corollary 1.9]{KK13};

\item[(ii)] $\Gamma_2$ is $C_3$-free
\cite[Theorem 1.11]{KK13};

\item[(iii)]
$\Gamma_2$ is $C_4$-free and $P_4$-free
\cite[Theorem 5.1]{CDK13}.
\end{enumerate}

The converse of the Extension Graph Theorem does not always hold.
Casals-Ruiz, Duncun and Kazachkov first gave an example~\cite{CDK13}.

\subsection{Main result}

The right-angled Artin groups on complements of trees
form an important class of right-angled Artin groups
because any right-angled Artin group embeds into a right-angled Artin
group on the complement of a tree~\cite{KK15,LL16}.

In this article we show that the converse of the Extension Graph Theorem
does not hold for the case $\Gamma_1$ is the complement of a tree
and for the case  $\Gamma_2$ is the complement of a path graph.

\medskip\noindent\textbf{Main Theorem} (Corollary~\ref{colo:main}).\/
\emph{There exist a finite tree $T$ and a finite path graph $P$ such that
$A(\bar T)$ embeds into $A(\bar P)$ but $\bar T$ does not embed into
$\bar P^e$ as an induced subgraph.}

\subsection{Opposite convention}
From now on, we adopt the opposite of the usual convention
for right-angled Artin groups
as it is more appropriate for our approach:
$$ G(\Gamma)=\langle\, v\in V(\Gamma)\mid [v_i,v_j]=1\
\mbox{if $\{v_i,v_j\}\not\in E(\Gamma)$}\,\rangle.
$$
Namely, $G(\Gamma)=A(\bar\Gamma)$.
For the extension graph we write
$\Gamma^E=\overline{\bar \Gamma^e}$.
The vertices of $\Gamma^E$ coincide with the vertices of $\bar\Gamma^e$
and two vertices of $\Gamma^E$ are adjacent if and only if
they do not commute in $G(\Gamma)$, that is,
\begin{align*}
V(\Gamma^E)&=\{\, a^g\in G(\Gamma): a\in V(\Gamma),\ g\in G(\Gamma)\,\},\\
E(\Gamma^E)&=\{\,\{a^g,b^h\}: a^g,b^h\in V(\Gamma^E),\
\mbox{$[a^g,b^h]\ne 1$ in $G(\Gamma)$}\,\}.
\end{align*}

Note that for   graphs $\Gamma_1$ and $\Gamma_2$ the following hold:
\begin{itemize}
\item[(i)] $\Gamma_1\le\Gamma_2$ if and only if $\bar\Gamma_1\le\bar\Gamma_2$;
\item[(ii)] $\Gamma_1\le\Gamma_2^E$ if and only if $\bar\Gamma_1\le\bar\Gamma_2^e$.
\end{itemize}
Therefore the Extension Graph Theorem is equivalent to
``For finite   graphs $\Gamma_1$ and $\Gamma_2$,
if\/ $\Gamma_1\le\Gamma_2^E$ then $G(\Gamma_1)\le G(\Gamma_2)$.''

\subsection{Our approach toward proving Main Theorem}

Let $T_{p,q,r}$ be the tripod whose three leaves
contain $p$, $q$ and $r$ vertices, respectively.
For instance, $T_{3,2,2}$ and $T_{2,2,2}$ are illustrated in Figure~\ref{fig:Tripods}.
Let $T_2$ denote the tripod $T_{2,2,2}$.

\begin{figure}
\begin{tabular}{*5c}
$\begin{xy}
(-20,0) *{\bullet}; (-10,0) *{\bullet} **@{-}; (0,0) *{\bullet} **@{-}; (10,0) *{\bullet} **@{-};
(17,7) *{\bullet} **@{-}; (24,14) *{\bullet} **@{-};
(10,0); (17,-7) *{\bullet} **@{-}; (24,-14) *{\bullet} **@{-};
\end{xy}$
&\qquad\qquad\qquad&
$\begin{xy}
(-10,0) *{\bullet}; (0,0) *{\bullet} **@{-}; (10,0) *{\bullet} **@{-};
(17,7) *{\bullet} **@{-}; (24,14) *{\bullet} **@{-};
(10,0); (17,-7) *{\bullet} **@{-}; (24,-14) *{\bullet} **@{-};
\end{xy}$\\[3mm]
(a) $T_{3,2,2}$ &&
(b) $T_{2,2,2}=T_2$
\end{tabular}
\caption{Tripods}
\label{fig:Tripods}
\end{figure}

We obtain the Main Theorem by proving the following:
\begin{itemize}
\item[(i)] $G(T_2)$ embeds into $G(P_{22})$ (Theorem~\ref{thm:A});
\item[(ii)] $T_2$ does not embed into $P_n^E$ as an induced subgraph for any $n$ (Theorem~\ref{thm:B}).
\end{itemize}

The non-embeddability of $T_2$ into $P_n^E$
gives rise to a question: Which trees $T$ admit
an embedding into $P_n^E$ as an induced subgraph for some $n$?
For this class of trees $T$,
the right-angled Artin group $G(T)=A(\bar T)$ embeds into $G(P_n)=A(\bar P_n)$ for some $n$.
In Theorem~\ref{thm:hairy}, we obtain a characterization of such trees.

\subsection{Universal family of graphs for right-angled Artin groups}
\label{sec:universal}

Let us say that a collection $\mathcal G$ of finite   graphs is
a \emph{universal family of graphs for right-angled Artin groups}
if for any right-angled Artin group $G(\Gamma_1)$
there exists $\Gamma_2\in\mathcal G$ such that $G(\Gamma_1)\le G(\Gamma_2)$.
Kim and Koberda showed that the family of finite trees provides
a universal family of graphs for right-angled Artin groups.

\begin{theorem}[\cite{KK15,LL16}]\label{thm:antitree}
For any finite   graph $\Gamma$,
there exists a finite tree $T$ such that $G(\Gamma)\le G(T)$.
\end{theorem}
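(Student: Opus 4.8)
The plan is to let the Extension Graph Theorem absorb all of the group theory, so that only a combinatorial statement about extension graphs remains. \emph{Reduction.} Since $G(\Gamma)=A(\bar\Gamma)$, the assertion $G(\Gamma)\le G(T)$ is the same as $A(\bar\Gamma)\le A(\bar T)$. By the $G$-form of the Extension Graph Theorem recorded above, it suffices to produce a finite tree $T$ with $\Gamma\le T^E$, and by the equivalence $\Gamma_1\le\Gamma_2^E\iff\bar\Gamma_1\le\bar\Gamma_2^e$ this is the same as producing a tree $T$ for which $\bar\Gamma$ is an induced subgraph of the extension graph $\bar T^e$. Writing $\Lambda=\bar\Gamma$, which is an arbitrary finite graph, the theorem becomes the purely combinatorial claim: \emph{every finite graph $\Lambda$ occurs as an induced subgraph of $\bar T^e$ for some finite tree $T$.}

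\emph{Main tool.} To attack this claim I would use the description of an extension graph as a direct limit of doublings along closed stars. For a finite graph $\Delta$ and a vertex $v$, let $D_v(\Delta)$ be obtained by gluing two copies of $\Delta$ along the closed star of $v$. The two facts I would rely on (essentially due to Kim--Koberda) are that $D_v(\Delta)\le\Delta^e$ and that every finite induced subgraph of $\Delta^e$ already appears inside some finite iteration of this doubling. One clean primitive that follows is a \emph{false-twin move}: if $w\not\sim v$ and every neighbour of $w$ is a neighbour of $v$, then the copy $w'$ of $w$ inside $D_v(\Delta)$, together with the original $\Delta$, realizes a new vertex $w'$ that is non-adjacent to $w$ and has exactly the same neighbours as $w$. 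Thus $\Lambda\le\bar T^e$ will follow once I exhibit a tree $T$ and a finite sequence of star-doublings of the anti-tree $\bar T$ producing a graph that contains $\Lambda$ as an induced subgraph.

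\emph{Construction.} I would build $T$ directly from $\Lambda$. Since I only need $\Lambda$ to be \emph{induced} in the final doubled graph, I am free to introduce auxiliary vertices and discard them at the end; the real target is a larger graph $\widehat\Lambda$ with $\Lambda\le\widehat\Lambda$ that can be assembled from $\bar T$ by doublings. The idea is to encode the adjacency pattern of $\Lambda$ in the shape of $T$, grafting hairs and short branches onto $T$ so that, in $\bar T$, one has at each stage a vertex whose closed star meets the already-placed copy of $\Lambda$ in precisely the intended neighbour set of the next vertex, and then doubling along that vertex. Tracking which vertices are identified (those in the shared star) and which are duplicated lets one compute the induced subgraph and verify it is the desired piece of $\Lambda$.

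\emph{Main obstacle.} The hard part will be exactly this bookkeeping. A single doubling does not add a vertex of arbitrary neighbourhood: it reflects the whole complement of a star, so to realize an \emph{arbitrary} adjacency pattern one must choose $T$ and the sequence of doubling vertices so that, at every stage, the star along which one doubles separates the current copy of $\Lambda$ exactly along the partition ``neighbours of the new vertex'' versus ``the rest'', while creating no spurious edges among the vertices already placed and keeping $T$ a tree throughout. Guaranteeing that such a separating star is always available is the technical heart of the argument; it is precisely the flexibility that complements of trees enjoy and complements of paths lack, in agreement with the companion fact (Theorem~\ref{thm:B}) that $T_2$ embeds into no $P_n^E$.
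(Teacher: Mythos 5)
The paper does not prove this statement: it is quoted from \cite{KK15} and \cite{LL16}, so there is no internal proof to measure your attempt against. Your opening reduction is sound and matches the route taken in \cite{KK15}: since $G(\Gamma)=A(\bar\Gamma)$ and $G(T)=A(\bar T)$, the Extension Graph Theorem reduces the problem to the purely combinatorial claim that every finite graph $\Lambda$ ($=\bar\Gamma$) occurs as an induced subgraph of $\bar T^e$ for some finite tree $T$. (The other cited source, \cite{LL16}, argues differently, via explicit graph homomorphisms $\phi:\Gamma_2\to\Gamma_1$ whose induced maps $\phi^*$ are injective --- the same technique this paper uses in Proposition~\ref{prop:deg3} --- so the Extension Graph Theorem is not the only available engine here.)

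However, your argument has a genuine gap: after the reduction, the combinatorial claim \emph{is} the theorem, and you never establish it. Everything from ``Construction'' onward describes what a proof would need to do, and your ``Main obstacle'' paragraph concedes that the decisive step is not carried out. To see why the missing step is not routine, recall what a single doubling along the closed star $\Lk(v)\cup\{v\}$ actually produces: for each $u\notin\Lk(v)\cup\{v\}$ it adjoins a copy $u'$ whose neighbours among the \emph{original} vertices are exactly $\Lk(u)\cap(\Lk(v)\cup\{v\})$, and which is non-adjacent to every other original vertex, including $u$. Hence to realize a new vertex of $\Lambda$ with prescribed neighbourhood $S$ inside an already-placed set $U$, you must exhibit, in the current finite approximation of $\bar T^e$, a pair $(u,v)$ with $\Lk(u)\cap(\Lk(v)\cup\{v\})\cap U=S$ while keeping all of $U$ on the original side; no argument is given that such a pair always exists when one starts from the complement of a tree, nor is the tree $T$ ever specified as a function of $\Lambda$. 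The false-twin move you isolate is correct but far too weak on its own: it only creates vertices whose neighbourhood duplicates that of an existing vertex, so it cannot generate arbitrary adjacency patterns. Until a concrete $T$ and doubling sequence are exhibited and verified, this is a proof plan rather than a proof.
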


Using the fact that if $\Gamma_1$ is an edge-contraction of $\Gamma_2$
then $G(\Gamma_1)\le G(\Gamma_2)$~\cite{Kim08,KK13}, we can see
that the family of finite trees with degree $\le 3$ at each vertex
is also universal~\cite{Kat16}.
It would be interesting to find a universal family
smaller than this. For instance, one can ask whether or not
the family of path graphs is universal.

\begin{question}
Which right-angled Artin group embeds into $G(P_n)$ for some $n$?
\end{question}

Concerning the above question, Katayama proposed the following question~\cite{Kat16}.

\begin{question}[Question 5.2 in \cite{Kat16}]
Is it possible that $G(T_2)\le G(P_n)$ for some $n$?
\end{question}

Theorem~\ref{thm:A} gives an affirmative answer to the above question,
and Theorem~\ref{thm:hairy} gives a family of trees $T$ with $G(T)\le G(P_n)$.

We ask the same question as above for $T_{p,q,r}$:
For which $p,q,r$, do we have $G(T_{p,q,r})\le G(P_n)$ for some $n$?
It seems very hard to embed $G(T_{p,q,r})$ into $G(P_n)$ if $p,q,r$ are large.
We therefore propose the following:

\begin{conjecture}
If $p,q,r$ are large enough,
then $G(T_{p,q,r})$ does not embed into $G(P_n)$ for any $n$.
\end{conjecture}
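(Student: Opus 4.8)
We outline a possible line of attack, since the statement is a genuine open problem and, as we explain, the techniques developed in this paper provably do not suffice. The natural strategy is to locate a group invariant that is monotone under passage to subgroups, that is controlled for the groups $G(P_n)$ uniformly in $n$, and that grows with $p,q,r$ for $G(T_{p,q,r})$. The first task is to see why the obvious candidates fail. Cohomological dimension equals the independence number of the defining non-commutation graph, so $\operatorname{cd}G(P_n)=\lceil n/2\rceil$ and $\operatorname{cd}G(T_{p,q,r})$ is the independence number of the tripod; since the former is unbounded, this only constrains $n$ and never forbids an embedding. The purely combinatorial obstruction is also insufficient: because $T_2\le T_{p,q,r}$ as an induced subgraph whenever $p,q,r\ge 2$, Theorem~\ref{thm:B} already yields $T_{p,q,r}\not\le P_n^E$ for \emph{all} $n$, yet $G(T_2)\le G(P_{22})$ by Theorem~\ref{thm:A}. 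Thus the failure of the converse of the Extension Graph Theorem (Theorem~\ref{thm:EGT}) for path graphs means that extension-graph non-embedding carries no information at the group level, and a genuinely group-theoretic obstruction is required.

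The approach I would pursue reads the tripod through the branching of its non-commutation graph. In $G(\Gamma)$ the defining graph $\Gamma$ is exactly the non-commutation graph of the standard generators, and the tripod is distinguished from every path by a vertex of degree three. Concretely, the central generator $v_c$ together with the three arm-roots gives an element that fails to commute with three pairwise-commuting elements (generating a copy of $\mathbb Z^3$), each of which initiates a long non-commuting chain, the three chains being pairwise commuting. I would therefore recast the conjecture as: \emph{$G(P_n)$ contains no configuration consisting of an element $g$ and three pairwise-commuting elements $x,y,z$, each non-commuting with $g$, that root three long pairwise-commuting non-commuting chains.} The heart of the matter becomes a bound, uniform in $n$, on the ``branching width'' of such chain-configurations inside a right-angled Artin group whose non-commutation graph is a path.

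The key steps would then be: first, to develop a straightening procedure for the images of the tripod generators in $G(P_n)$, reducing each image, via its support and normal form, to a controlled product of conjugates of standard generators; second, to establish the branching bound, namely that three pairwise-commuting long non-commuting chains cannot coexist over a common non-commuting element, because the degree-$\le 2$ non-commutation structure of the path propagates to families of conjugates and caps the available branching at two. At the level of conjugates of generators this branching bound is essentially the content of Theorem~\ref{thm:B}, since $P_n^E$ contains no induced $T_2$; the real work is to upgrade it from conjugates to arbitrary elements. Third, one must make the estimate quantitative, so that it is precisely the \emph{length} of the arms, rather than the mere presence of a degree-three vertex, that forces the contradiction: this is essential for consistency with $G(T_2)\le G(P_{22})$, in which short arms do fit. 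I would try to realise the second and third steps geometrically, through the action of $G(P_n)$ on its Salvetti complex, where commutation of conjugates corresponds to crossing of hyperplanes (so that the crossing graph is a copy of $P_n^E$) and a tripod of long commuting chains would demand a degree-three branching of crossing patterns that the path geometry forbids beyond bounded depth.

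The main obstacle is exactly the point at which the methods of this paper break down. The failure of the converse of Theorem~\ref{thm:EGT} for paths means that the images of the generators cannot in general be straightened to conjugates of standard generators, and the embedding $G(T_2)\le G(P_{22})$ is concrete evidence that ``non-straightenable'' embeddings genuinely exist. Controlling such embeddings when the arms are long—quantifying how the cost of a branched chain-configuration eventually exceeds what any $P_n$ can accommodate—is the crux, and it is here that a new ingredient seems unavoidable: either an asymptotic straightening estimate bounding how far an arbitrary commuting triple can deviate from a configuration of vertex-conjugates, or a new subgroup-monotone invariant that is sensitive to branching but insensitive to size. I would regard the construction of even a single such invariant, monotone under subgroups and separating the family $\{G(P_n)\}_{n}$ from $G(T_{p,q,r})$ for large $p,q,r$, as the decisive step toward the conjecture.
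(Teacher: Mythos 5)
The statement you were asked to prove is not proved in the paper: it is stated as a \emph{conjecture}, introduced with the remark that ``it seems very hard to embed $G(T_{p,q,r})$ into $G(P_n)$ if $p,q,r$ are large.'' There is therefore no proof in the paper to compare your attempt against, and your submission, quite properly, does not claim to be one. Your factual assertions check out: the cohomological dimension of $G(\Gamma)=A(\bar\Gamma)$ is indeed the independence number of $\Gamma$, hence unbounded over the family $\{G(P_n)\}$ and useless as an obstruction; $T_2\le T_{p,q,r}$ for $p,q,r\ge 2$, so Theorem~\ref{thm:B} does give $T_{p,q,r}\not\le P_n^E$ for all $n$; and the juxtaposition with Theorem~\ref{thm:A} correctly shows that non-embedding into the extension graph cannot, by itself, yield non-embedding of the groups. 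Your diagnosis of why a genuinely new, subgroup-monotone, branching-sensitive invariant (or an asymptotic straightening estimate for images of commuting triples) is needed is consistent with the paper's own framing of the problem.

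That said, as a proof the submission has a complete gap rather than a local one: every substantive step is postulated, not carried out. The ``branching bound'' for three pairwise-commuting long non-commuting chains over a common element is exactly the conjecture restated in different language, and the proposed geometric realization via hyperplanes in the Salvetti complex is a heuristic, not an argument -- in particular, the embedding $G(T_2)\le G(P_{22})$ shows that any such bound must be quantitative in the arm lengths, and nothing in the proposal indicates where such a quantitative estimate would come from. So the correct assessment is: the statement remains open, your outline is a reasonable and internally accurate research program, but it does not constitute a proof, and the decisive ingredient you yourself identify (the invariant or the straightening estimate) is still missing.
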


\subsection{A remark on Theorem 3.14 in~\cite{Cas15}}
Theorem 3.14 of~\cite{Cas15} claims the following:
``For a forest $F$ and a finite   graph $\Gamma$,
$G(F)\le G(\Gamma)$ if and only if
$F\le\Gamma^E$''.
In other words, it claims that the converse of the Extension Graph Theorem holds
for complements of forests.
This conflicts to our Main Theorem.

In the proof of Theorem 3.14 in~\cite{Cas15}, the following argument is used.
``For $1\le i\le k$, let $g_i\in G(\Gamma)$ be a product of mutually commuting elements,
i.e.\ $g_i=y_{i,1}y_{i,2}\cdots y_{i,r_i}$ such that $[y_{i,p},y_{i,q}]=1$ for all $1\le p<q\le r_i$.
Using commutator identities, the iterated commutator
$[g_1,g_2,g_3,\ldots,g_k]=[\ldots[[g_1,g_2],g_3],\ldots,g_k]$
can be written as a product
$$\prod_{s=(j_1,\ldots,j_k),\ 1\le j_i\le r_i} [y_{1,j_1},y_{2,j_2},\ldots,y_{k,j_k}]^{g(s)}
$$
for some elements $g(s)\in G(\Gamma)$.''
This is however not the case.
For instance, let us denote the vertices of the path graph $P_5$
by $x_1,\ldots,x_5$ as follows.
$$
\begin{xy}
(-20,2) *{\bullet};
(-10,2) *{\bullet} **@{-};
(  0,2) *{\bullet} **@{-};
( 10,2) *{\bullet} **@{-};
( 20,2) *{\bullet} **@{-};
(-20,-1) *{x_1};
(-10,-1) *{x_2};
(  0,-1) *{x_3};
( 10,-1) *{x_4};
( 20,-1) *{x_5};
\end{xy}
$$
In $G(P_5)$, $[x_i,x_j]=1$ if and only if $|i-j|\ge 2$.
A direct computation shows
$$[x_2x_4,x_3,x_1,x_5]\ne 1.$$
Notice that
$[x_2,x_3,x_1,x_5]=[x_4,x_3,x_1,x_5]=1$.
If the argument in~\cite{Cas15} were correct, then
$$[x_2x_4,x_3,x_1,x_5]=[x_2,x_3,x_1,x_5]^g [x_4,x_3,x_1,x_5]^h$$
for some $g,h\in G(P_5)$, which results in $[x_2x_4,x_3,x_1,x_5]=1$.
It is a contradiction.

\subsection{Organization}
Section 2 reviews basic materials.
Section 3 shows that $G(T_2)$ embeds into $G(P_{22})$.
Section 4 shows that $T_2$ does not embed into $P_n^E$ as an induced subgraph
for any $n$.

\section{Preliminaries}

Let $\Gamma$ be a finite   graph.
For a vertex $v\in V(\Gamma)$, the \emph{link} of $v$ in $\Gamma$ is the set
$\Lk_\Gamma(v)=\{\, u\in V(\Gamma)\mid \{v,u\}\in E(\Gamma) \,\}$.
We simply write $\Lk(v)$ for $\Lk_\Gamma(v)$ if $\Gamma$ is clear from context.
For $A \subset V(\Gamma)$, we denote by $\Gamma{\backslash} A$
the subgraph of $\Gamma$ induced by $V(\Gamma){\backslash} A$.

Each element in $V(\Gamma)\cup V(\Gamma)^{-1}$ is called a \emph{letter}.
An element in $G(\Gamma)$ can be expressed as a word, which is
a finite product of letters.
Abusing notation, we shall sometimes regard a word as the group element represented by that word.
Let $w=a_1\cdots a_k$ be a word in $G(\Gamma)$
where $a_1,\ldots,a_k$ are letters.
We say that $w$ is \emph{reduced} if any other word representing the same element
in $G(\Gamma)$ as $w$ has at least $k$ letters.

For $g\in G(\Gamma)$, the \emph{support} of $g$, denoted by $\supp(g)$,
is the set of vertices $v$ such that $v$ or $v^{-1}$ appears
in a reduced word for $g$.
It is known that $\supp(g)$ is well-defined.

Let $w$ be a (possibly non-reduced) word in  $G(\Gamma)$.
A subword $v^{\pm1}w_1v^{\mp1}$ of $w$ is called a \emph{cancellation} of $v$ in $w$
if ${\operatorname{supp}}(w_1)\cap {\operatorname{Lk}}_\Gamma(v)=\emptyset$.
If, furthermore, no letter in $w_1$ is equal to $v$ or $v^{-1}$,
it is called an \emph{innermost cancellation} of $v$ in $w$.
It is known that $w$ is reduced if and only if $w$ has no innermost cancellation.

Let $\phi:\Gamma_2\to\Gamma_1$ be a graph homomorphism between finite graphs,
i.e.\ a function
from $V(\Gamma_2)$ to $V(\Gamma_1)$ that maps adjacent vertices to adjacent vertices.
Then $\phi$ induces a group homomorphism $\phi^*:G(\Gamma_1)\to G(\Gamma_2)$
defined by
$$\phi^*(v)=\prod\limits_{v'\in \phi^{-1}(v)} v'$$
for $v\in V(\Gamma_1)$, where the product is defined to be the identity
if $\phi^{-1}(v)$ is the empty set.
Since $\phi$ is a graph homomorphism and since $\Gamma_1$ has no loops,
no two vertices of $\phi^{-1}(v)$ are adjacent, hence the product is well-defined.
Abusing notation, for a word $w$ in $G(\Gamma_1)$, $\phi^*(w)$ denotes
the word defined by the product.
For this, we may fix a total order on $V(\Gamma_2)$
and write each product $\prod_{v'\in \phi^{-1}(v)} v'$
in the increasing order.

\begin{definition}
We say that $\phi:\Gamma_2\to\Gamma_1$ is \emph{$v'$-surviving} for $v'\in V(\Gamma_2)$
if for any reduced word $w$ in $G(\Gamma_1)$,
the word $\phi^*(w)$ has no innermost cancellation of $v'$.
\end{definition}

The following lemma follows from well-known solutions to the word problem in
right-angled Artin groups~\cite{Cha07}.

\begin{lemma} \label{lem:comm1}
Let $a\in V(\Gamma)$ and $w\in G(\Gamma)$.
Then $[a,w]=1$ if and only if\/
$\Lk(a)\cap\supp(w)=\emptyset$, i.e.\ $[a,c]=1$ for each $c\in\supp(w)$.
\end{lemma}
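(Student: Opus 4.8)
The plan is to prove the two implications separately, treating the backward direction as the substantial one.

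For the direction $\Lk(a)\cap\supp(w)=\emptyset\Rightarrow[a,w]=1$, I would fix a reduced word $w=c_1\cdots c_m$. Every vertex occurring in this word lies in $\supp(w)$, hence outside $\Lk(a)$, so by the defining relations of $G(\Gamma)$ each letter $c_i$ commutes with $a$. Consequently $a$ commutes with the whole product, giving $aw=wa$. This direction is routine.

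For the converse, assume $[a,w]=1$ and argue by induction on the length $m$ of a reduced word $w=c_1\cdots c_m$, the case $m=0$ being trivial. Write the hypothesis as $aw=wa$ and distinguish according to whether the word $aw=a\,c_1\cdots c_m$ is reduced. The main case is when it is reduced: then $aw$ has minimal length $m+1$, so the length-$(m+1)$ word $wa=c_1\cdots c_m\,a$ representing the same element is reduced as well. Here I would invoke the solution of the word problem from \cite{Cha07}: two reduced words represent the same element of $G(\Gamma)$ precisely when one is obtained from the other by repeatedly transposing adjacent letters whose vertices commute. A transposition never exchanges two letters whose vertices are adjacent in $\Gamma$, and two adjacent equal-vertex letters in a reduced word are necessarily equal rather than inverse; hence, for any fixed vertex $y$, the subsequence of letters with vertex in $\{a,y\}$ is the same word for every reduced representative. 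Applying this to $aw$ and $wa$: if some $y\in\Lk(a)$ lay in $\supp(w)$, then letting $\sigma$ denote the $\{a,y\}$-subsequence of $c_1\cdots c_m$ (which then contains a $y$-letter), the subsequences $a\sigma$ of $aw$ and $\sigma a$ of $wa$ would have to coincide as words, forcing $\sigma$ to be a power of $a$ and contradicting the presence of $y$. Thus $\supp(w)\cap\Lk(a)=\emptyset$. I expect this subsequence-invariance step to be the crux, since it is exactly where the normal form theory enters.

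It remains to treat the case where $aw$ is not reduced. Because $c_1\cdots c_m$ is reduced, any innermost cancellation in $a\,c_1\cdots c_m$ must pair the leading $a$ with some later $c_i=a^{-1}$ across a prefix $c_1\cdots c_{i-1}$ whose support misses $\Lk(a)$; in particular $c_1,\dots,c_{i-1}$ each commute with $a$. Writing $w=p\,a^{-1}q$ with $p=c_1\cdots c_{i-1}$ and $q=c_{i+1}\cdots c_m$, the relation $aw=wa$ simplifies, after moving $a$ past the commuting prefix $p$ and cancelling, to $aq=qa$. Since $q$ is a suffix of a reduced word it is itself reduced and strictly shorter than $w$, so the induction hypothesis yields $\supp(q)\cap\Lk(a)=\emptyset$; combined with $\supp(p)\cap\Lk(a)=\emptyset$ and $a\notin\Lk(a)$ this gives $\supp(w)\cap\Lk(a)=\emptyset$, completing the induction. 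The only genuine difficulty beyond bookkeeping is the reduced case above; the non-reduced case is a short reduction to a shorter instance.
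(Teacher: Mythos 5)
Your proof is correct. Note that the paper does not actually prove this lemma: it states it with the remark that it ``follows from well-known solutions to the word problem in right-angled Artin groups'' and cites [Cha07]. What you have written is precisely the standard derivation from that word-problem solution (the shuffle/normal-form theorem: two reduced words represent the same element iff they differ by transpositions of adjacent commuting letters), so in spirit it matches what the authors intend, just spelled out. The one place that deserves the care you gave it is the subsequence-invariance step: it works exactly because, under the paper's reversed convention for $G(\Gamma)$, a vertex $y\in\Lk(a)$ does \emph{not} commute with $a$, so no transposition can ever exchange an $a$-letter with a $y$-letter, and the literal string identity $a\sigma=\sigma a$ then forces $\sigma$ to be a power of $a$. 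Your handling of the non-reduced case is also sound: any cancellation in $a\,c_1\cdots c_m$ must involve the leading letter since $w$ is reduced, the prefix $p$ commutes with $a$ by the easy direction, and the suffix $q$ is reduced and shorter, so the induction closes. A marginally shorter route to the converse is to observe that killing all generators outside $\{a,y\}$ is a retraction onto the free group $\langle a,y\rangle$ when $y\in\Lk(a)$, and centralizers in a free group are cyclic; but that still needs your subsequence argument to see that the image of $w$ is a nontrivial non-power of $a$, so nothing is really saved.
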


\begin{lemma}\label{lem:comm2}
Let $w^{-1}bw$ be a reduced word for $b\in V(\Gamma)$ and $w\in G(\Gamma)$. Then
\begin{itemize}
\item[(i)] $\supp(w^{-1}bw)=\{b\}\cup\supp(w)$;
\item[(ii)] $\supp(w^{-1}bw)$ spans a connected subgraph of $\Gamma$.
\end{itemize}
\end{lemma}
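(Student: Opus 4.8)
For part~(i) I would argue directly from the definition of support. Since $w^{-1}bw$ is a reduced word, a subword of it is again reduced; in particular $w$ and $w^{-1}$ are reduced, and $\supp(w^{-1})=\supp(w)$. Because $\supp$ of a group element is the set of vertices occurring in any of its reduced words, and $w^{-1}bw$ is itself such a reduced word, its support is exactly the set of underlying vertices of its letters, namely $\supp(w^{-1})\cup\{b\}\cup\supp(w)=\{b\}\cup\supp(w)$. This settles~(i) with essentially no computation.

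For part~(ii) the plan is induction on the number of letters of $w$. In the base case $w$ is the empty word, so $w^{-1}bw=b$ and $\supp=\{b\}$ spans a single vertex, which is connected. For the inductive step, write $w=w'v^{\varepsilon}$ where $v\in V(\Gamma)$ and $\varepsilon\in\{\pm1\}$ record the last letter of $w$, and set $u=w'^{-1}bw'$. Then $w^{-1}bw=v^{-\varepsilon}\,u\,v^{\varepsilon}$. Since $u$ is the subword of the reduced word $w^{-1}bw$ obtained by deleting its first and last letters, $u$ is reduced and $w'$ is reduced, so the inductive hypothesis applies to $u=w'^{-1}bw'$: its support $\{b\}\cup\supp(w')$ spans a connected subgraph of $\Gamma$. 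Moreover $\supp(w^{-1}bw)=\supp(u)\cup\{v\}$ by part~(i).

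It then remains to connect the new vertex $v$ to the already-connected set $\supp(u)$. If $v\in\supp(u)$ there is nothing to add and we are done. Otherwise I claim $v$ is adjacent in $\Gamma$ to some vertex of $\supp(u)$. The key point, and the heart of the argument, is that reducedness of $w^{-1}bw$ forbids $v$ from commuting with all of $\supp(u)$: if $\Lk(v)\cap\supp(u)=\emptyset$, then $[v,u]=1$ by Lemma~\ref{lem:comm1}, whence $v^{-\varepsilon}uv^{\varepsilon}=u$ and $w^{-1}bw$ would represent the same element as the strictly shorter word $u$, contradicting that $w^{-1}bw$ is reduced. Hence $\Lk(v)\cap\supp(u)\neq\emptyset$, i.e.\ there is an edge of $\Gamma$ joining $v$ to a vertex of the connected set $\supp(u)$, so $\supp(u)\cup\{v\}$ is connected. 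This closes the induction and proves~(ii).

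The step I expect to require the most care is precisely this translation of a combinatorial fact about words (reducedness of the outer conjugate) into the algebraic statement $\Lk(v)\cap\supp(u)\neq\emptyset$ via Lemma~\ref{lem:comm1}; once that equivalence is in hand, connectivity follows by gluing $v$ onto the connected set $\supp(u)$ through the resulting edge. The bookkeeping that $u$ and $w'$ inherit reducedness as subwords, and that the inductive hypothesis is being applied to a conjugate of exactly the same shape, is routine but should be stated explicitly to justify the recursion.
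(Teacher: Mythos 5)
Your proof is correct. Part~(i) is the same computation as the paper's. For part~(ii) you take a genuinely different route: you induct on the word length of $w$, peeling off the last letter $v^{\varepsilon}$ and using Lemma~\ref{lem:comm1} to show that reducedness forces $v$ to be adjacent to some vertex of the (inductively connected) support of the inner conjugate $u=w'^{-1}bw'$; the needed facts — that contiguous subwords of reduced words are reduced, and that $[v,u]=1$ would let you delete the outer pair of letters — are exactly right. The paper instead argues in one shot by contradiction: if the induced subgraph on $\supp(w^{-1}bw)$ were disconnected, the letters of $w$ supported off the component of $b$ would commute with $b$ and with the rest of $w$, so $w$ could be rewritten as $w_2w_1$ with $w_2$ cancelling entirely in $w^{-1}bw$, contradicting reducedness. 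Your inductive version builds the connectivity vertex by vertex and localizes the use of the commutation criterion to a single letter, which makes each step very concrete; the paper's decomposition argument is shorter and avoids induction, at the cost of the slightly more global claim that $w$ factors as a product of commuting subwords indexed by the components. Both are complete proofs of the same strength.
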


\begin{proof}
(i)\ \ Notice that if $w_1w_2$ is a reduced word, then $\supp(w_1w_2)=\supp(w_1)\cup\supp(w_2)$
and that $\supp(w^{-1})=\supp(w)$ for any word $w$.
Since $w^{-1}bw$ is reduced, one has
$\supp(w^{-1}bw)=\supp(w^{-1})\cup \{b\}\cup\supp(w)=\{b\}\cup\supp(w)$.

(ii)\ \
Let $\Gamma_0$ be the subgraph of $\Gamma$ induced by $\supp(w^{-1}bw)$.
Assume that $\Gamma_0$ is not connected.
Let $\Gamma_1$ be the component of $\Gamma_0$ containing $b$,
and let $\Gamma_2=\Gamma_0\setminus\Gamma_1$.
Then $w=w_2w_1$ for some nontrivial reduced words $w_1\in G(\Gamma_1)$ and $w_2\in G(\Gamma_2)$
because $[a_1,a_2]=1$ for $a_1\in V(\Gamma_1)$ and $a_2\in V(\Gamma_2)$.
Since each vertex of $\Gamma_2$ commutes with $b$,
we have $w^{-1}bw=w_1^{-1}w_2^{-1}bw_2w_1=w_1^{-1}bw_1$.
This contradicts the hypothesis that $w^{-1}bw$ is reduced.
\end{proof}

\section{Two local moves on graphs}

In this section,
we propose two local moves on graphs that give rise to
an embedding between right-angled Artin groups.
Combining with a result in~\cite{LL16}, we obtain $G(T_2)\le G(P_{22})$.

\begin{proposition}\label{prop:deg1k}
Let $\Gamma_1$ be a finite   graph with a degree $k+2$ vertex $x$
for $k\ge 1$.
Let $\Lk(x)=\{a_1,\ldots,a_k\}\cup\{b,c\}$.
Suppose each $a_i$ has degree 1.
See Figure~~\ref{fig:deg1k}(a).
Let $\Gamma_2$ be the graph obtained from $\Gamma_1$ by deleting
$a_1,\ldots,a_k$ and then by replacing $x$ with the path graph $P_{2k+1}$
as in Figure~\ref{fig:deg1k}(b).
Then $\Gamma_1\le\Gamma_2^E$ and hence $G(\Gamma_1)\le G(\Gamma_2)$.
\end{proposition}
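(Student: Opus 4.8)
The plan is to exhibit an explicit induced-subgraph embedding $\phi\colon\Gamma_1\hookrightarrow\Gamma_2^E$; the group embedding $G(\Gamma_1)\le G(\Gamma_2)$ then follows at once from the Extension Graph Theorem (Theorem~\ref{thm:EGT}) in the opposite-convention form recorded above. Label the path $P_{2k+1}$ that replaces $x$ as $x_1-x_2-\cdots-x_{2k+1}$, so that, as in Figure~\ref{fig:deg1k}(b), the two neighbours $b,c$ of $x$ become attached to the two endpoints of the path; say $b$ to $x_1$ and $c$ to $x_{2k+1}$. Write $R=V(\Gamma_1)\setminus\{x,a_1,\dots,a_k\}$ for the untouched part, and note that $b,c\in R$, that $\Lk_{\Gamma_1}(x)\cap R=\{b,c\}$, and that in $\Gamma_2$ the only vertices of $R$ adjacent to the path are $b$ (to $x_1$) and $c$ (to $x_{2k+1}$).

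I would define $\phi$ to be the identity on $R$, to send $\phi(a_i)=x_{2i}$ for $1\le i\le k$, and to send $\phi(x)=\xi$, where $\xi=x_1^{\,w}=w^{-1}x_1w$ with $w=x_2x_3\cdots x_{2k+1}$. The word $w^{-1}x_1w$ has no innermost cancellation: the single letter $x_1$ cannot be cancelled, and for $j\ge 2$ the block between the paired occurrences of $x_j$ has support $\{x_1,\dots,x_{j-1}\}$, which meets $\Lk_{\Gamma_2}(x_j)$ in $x_{j-1}$, so no cancellation of $x_j$ occurs. Hence $\xi$ is a reduced conjugate of the vertex $x_1$, and Lemma~\ref{lem:comm2}(i) gives $\supp(\xi)=\{x_1\}\cup\{x_2,\dots,x_{2k+1}\}$, the whole path. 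The even indices $2,4,\dots,2k$ are exactly the $k$ internal vertices of $P_{2k+1}$ lying in even position, and since both endpoints $1$ and $2k+1$ are odd these even vertices are all interior; this is precisely why the path must have $2k+1$ vertices.

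Finally I would verify that $\phi$ is an induced-subgraph embedding, i.e.\ that $u,v$ are adjacent in $\Gamma_1$ if and only if $\phi(u),\phi(v)$ fail to commute in $G(\Gamma_2)$, using Lemma~\ref{lem:comm1} (which applies to each pair, since in every pair at least one element is an honest vertex of $\Gamma_2$). For $u,v\in R$ this is immediate because the induced subgraphs of $\Gamma_1$ and $\Gamma_2$ on $R$ coincide. The images $\phi(a_i)=x_{2i}$ pairwise commute (even indices differ by at least $2$) and commute with every $v\in R$ (no $x_{2i}$ is adjacent to a vertex of $R$), matching the fact that each $a_i$ is a leaf. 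For the pair $x,a_i$ we have $\supp(\xi)\supseteq\Lk_{\Gamma_2}(x_{2i})=\{x_{2i-1},x_{2i+1}\}$, so $\xi$ and $x_{2i}$ do not commute, matching $x\sim a_i$. Lastly, since $\supp(\xi)$ is the whole path and among $R$ only $b,c$ are adjacent to the path (through $x_1,x_{2k+1}\in\supp(\xi)$), the element $\xi$ fails to commute with exactly $b$ and $c$ in $R$, matching $\Lk_{\Gamma_1}(x)\cap R=\{b,c\}$. This establishes $\Gamma_1\le\Gamma_2^E$.

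The routine but delicate point, and the step I expect to demand the most care, is the commuting profile of $\xi$: one must simultaneously force $\xi$ to be non-commuting with each leaf-image $x_{2i}$ and with $b,c$, yet commuting with every other vertex of $R$. This is exactly what requires $\supp(\xi)$ to contain all the odd path-vertices (to meet each link $\{x_{2i-1},x_{2i+1}\}$ as well as the endpoints $x_1,x_{2k+1}$) while remaining inside the path (so as not to create spurious non-commutation with $R\setminus\{b,c\}$). It is this constraint that forces $\phi(x)$ to be the spread-out conjugate $\xi$ rather than a single vertex, and it is where the hypotheses that the $a_i$ are leaves and that only $b,c$ reattach to the path endpoints are genuinely used.
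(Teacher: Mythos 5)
Your proposal is correct and is essentially the paper's own proof: the paper likewise sends $x$ to the conjugate $x_1^{x_2x_3\cdots x_{2k+1}}$ and identifies the $a_i$ with the even-indexed path vertices $x_2,x_4,\ldots,x_{2k}$, reading off the commuting profile from Lemmas~\ref{lem:comm1} and~\ref{lem:comm2}. Your explicit check that $w^{-1}x_1w$ is reduced is a detail the paper leaves implicit, but the argument is the same.
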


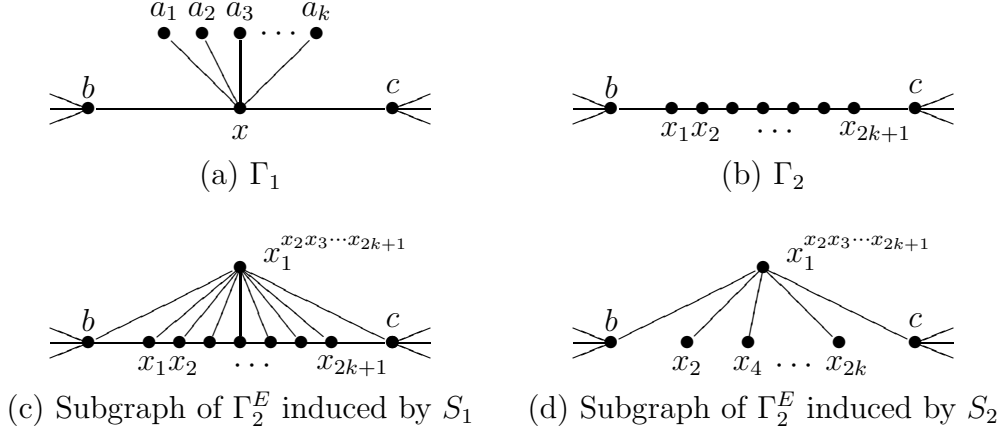
\begin{figure}
\begin{tabular}{*5c}
$\begin{xy}
(-20,0) *{\bullet};  (20,0) *{\bullet} **@{-}; (0,0)*{\bullet};
(-20, 3) *{b};
( 20, 3) *{c};
(0,-3) *{x};
(0,0); (-10,10) *{\bullet} **@{-};
(0,0); (-5,10) *{\bullet} **@{-};
(0,0); (-0,10) *{\bullet} **@{-};
(0,0); (10,10) *{\bullet} **@{-};
(-10, 13) *{a_1};
(-5, 13) *{a_2};
( 0, 13) *{a_3};
( 5, 10) *{\cdots};
( 10, 13) *{a_k};
(-25,-2); (-20,0) **@{-};
(-25, 0); (-20,0) **@{-};
(-25, 2); (-20,0) **@{-};
(25,-2); (20,0) **@{-};
(25, 0); (20,0) **@{-};
(25, 2); (20,0) **@{-};
\end{xy}$
&\qquad&
$\begin{xy}
(-20,0) *{\bullet}; (20,0) *{\bullet} **@{-};
(-20,  3) *{b};
( 20,  3) *{c};
(-12,0)*{\bullet};
(-8,0)*{\bullet};
(-4,0)*{\bullet};
( 0,0)*{\bullet};
( 4,0)*{\bullet};
( 8,0)*{\bullet};
(12,0)*{\bullet};
(-11.5,-3) *{x_1};
(-7.5,-3) *{x_2};
(1.5,-3) *{\cdots};
(10,-3) *!L{x_{2k+1}};
(-25,-2); (-20,0) **@{-};
(-25, 0); (-20,0) **@{-};
(-25, 2); (-20,0) **@{-};
(25,-2); (20,0) **@{-};
(25, 0); (20,0) **@{-};
(25, 2); (20,0) **@{-};
\end{xy}$\\[3mm]
(a) $\Gamma_1$ &&
(b) $\Gamma_2$\\[5mm]
$\begin{xy}
(0,10) *{\bullet}; (-20,0) *{\bullet}**@{-};
(0,10); (20,0) *{\bullet} **@{-};
(-20,0); (20,0) *{\bullet} **@{-};
(-20, 3) *{b};
( 20, 3) *{c};
(3,12) *!L{x_1^{x_2x_3\cdots x_{2k+1}}};
(0,10); (-12,0)*{\bullet}**@{-};
(0,10); (-8,0)*{\bullet}**@{-};
(0,10); (-4,0)*{\bullet}**@{-};
(0,10); ( 0,0)*{\bullet}**@{-};
(0,10); ( 4,0)*{\bullet}**@{-};
(0,10); ( 8,0)*{\bullet}**@{-};
(0,10); (12,0)*{\bullet}**@{-};
(-11.5,-3) *{x_1};
(-7.5,-3) *{x_2};
(1.5,-3) *{\cdots};
(10,-3) *!L{x_{2k+1}};
(-25,-2); (-20,0) **@{-};
(-25, 0); (-20,0) **@{-};
(-25, 2); (-20,0) **@{-};
(25,-2); (20,0) **@{-};
(25, 0); (20,0) **@{-};
(25, 2); (20,0) **@{-};
\end{xy}$
&&
$\begin{xy}
(0,10) *{\bullet}; (-20,0) *{\bullet}**@{-};
(0,10); (20,0) *{\bullet} **@{-};
(-20, 3) *{b};
( 20, 3) *{c};
(3,12) *!L{x_1^{x_2x_3\cdots x_{2k+1}}};
(0,10); (-10,0)*{\bullet}**@{-};
(0,10); (-2,0)*{\bullet}**@{-};
(0,10); (10,0)*{\bullet}**@{-};
(-10,-3) *{x_2};
(-2,-3) *{x_4};
(4,-3) *{\cdots};
(11,-3) *{x_{2k}};
(-25,-2); (-20,0) **@{-};
(-25, 0); (-20,0) **@{-};
(-25, 2); (-20,0) **@{-};
(25,-2); (20,0) **@{-};
(25, 0); (20,0) **@{-};
(25, 2); (20,0) **@{-};
\end{xy}$\\[3mm]
(c) Subgraph of $\Gamma_2^E$ induced by $S_1$&&
(d) Subgraph of $\Gamma_2^E$ induced by $S_2$&&
\end{tabular}
\caption{The graph $\Gamma_1$ embeds into $\Gamma_2^E$ as an induced subgraph.}
\label{fig:deg1k}
\end{figure}

\begin{proof}
By Lemmas~\ref{lem:comm1} and~\ref{lem:comm2},
the element $x_1^{x_2x_3\cdots x_{2k+1}}$ in $G(\Gamma_2)$
commutes with $v\in V(\Gamma_2)$
if and only if $v\not\in\{b, c, x_1,x_2,\ldots,x_{2k+1}\}$.
Hence the subgraph of $\Gamma_2^E$ induced by
$S_1=V(\Gamma_2)\cup\{x_1^{x_2x_3\cdots x_{2k+1}}\}$
is as in Figure~\ref{fig:deg1k}(c).
The subgraph of $\Gamma_2^E$ induced by
$S_2=S_1\setminus\{x_1,x_3,\ldots,x_{2k+1}\}$
is as in Figure~\ref{fig:deg1k}(d), which is isomorphic to $\Gamma_1$.
Therefore $\Gamma_1\le\Gamma_2^E$ and hence $G(\Gamma_1)\le G(\Gamma_2)$.
\end{proof}

\begin{proposition}\label{prop:deg3}
Let $\Gamma_1$ be a finite   graph containing a degree 3 vertex $x$
with $\Lk(x)=\{a,b,c\}$ as in Figure~\ref{fig:deg3}(a).
Let $\Gamma_2$ be the graph obtained from $\Gamma_1$
by replacing the tripod centered at $x$ with a 6-cycle as in Figure~\ref{fig:deg3}(b),
where each vertex $v\in V(\Gamma_1)\setminus \{x\}$ is renamed as $v_1\in V(\Gamma_2)$.
Then $G(\Gamma_1)\le G(\Gamma_2)$.
\end{proposition}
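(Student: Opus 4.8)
The plan is to build an explicit injective homomorphism $\Phi\colon G(\Gamma_1)\to G(\Gamma_2)$ rather than to pass through the extension graph; note that the statement only asserts a group embedding, and (unlike Proposition~\ref{prop:deg1k}) one should not expect $\Gamma_1\le\Gamma_2^E$ here. Denote the three new vertices of the $6$-cycle by $y_1,y_2,y_3$, interleaved with $a_1,b_1,c_1$ so that the cycle reads $a_1,y_1,b_1,y_2,c_1,y_3$; thus in $\Gamma_2$ each $y_i$ is adjacent only to two of $a_1,b_1,c_1$, the vertices $a_1,b_1,c_1$ remain pairwise non-adjacent, and all edges not meeting the old tripod are unchanged. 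I would then define $\Phi$ by $\Phi(v)=v_1$ for $v\ne x$ and $\Phi(x)=y_1y_2y_3$. Equivalently, $\Phi=\phi^*$ for the collapsing graph homomorphism $\phi\colon\Gamma_2\to\Gamma_1$ that sends $y_1,y_2,y_3\mapsto x$ and $v_1\mapsto v$ otherwise, which is readily checked to carry adjacent vertices to adjacent vertices.

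First I would verify that $\Phi$ is a well-defined homomorphism by checking it respects the defining relations of $G(\Gamma_1)$. For a relation $[u,v]=1$ with $u,v\ne x$ this is immediate, since adjacency among the renamed vertices is preserved by construction. For a relation $[x,v]=1$ one has $v\in L:=V(\Gamma_1)\setminus\{x,a,b,c\}$, and by Lemma~\ref{lem:comm1} the element $y_1y_2y_3$ commutes with $v_1$ precisely because $\supp(y_1y_2y_3)=\{y_1,y_2,y_3\}$ is disjoint from $\Lk(v_1)$, the $y_i$ having no neighbours outside $\{a_1,b_1,c_1\}$. The same computation shows $y_1y_2y_3$ fails to commute with each of $a_1,b_1,c_1$, as it must.

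The crux is injectivity of $\Phi$, and here the naive reduced-word method is unavailable: $\Phi$ is \emph{not} surviving at any $y_i$. For instance $xcx^{-1}$ is reduced in $G(\Gamma_1)$, yet $\Phi(xcx^{-1})=y_1y_2y_3\,c_1\,y_3^{-1}y_2^{-1}y_1^{-1}$ admits an innermost cancellation of $y_1$ (since $c_1\notin\Lk(y_1)$), reducing to $y_2y_3\,c_1\,y_3^{-1}y_2^{-1}$. Thus one cannot conclude that $\Phi$ sends reduced words to reduced words, and the cancellations among the new letters must be controlled directly. The plan is to exploit the vertex amalgam decomposition
\[
G(\Gamma_1)=G(\Gamma_1\setminus x)\,*_{G(L)}\,\bigl(G(L)\times\langle x\rangle\bigr),
\]
coming from the fact that the non-neighbours of $x$ are exactly $L$. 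I would check that $\Phi$ carries the two factors isomorphically onto $A':=G(\Gamma_2\setminus\{y_1,y_2,y_3\})$ and $B':=G(L_1)\times\langle y_1y_2y_3\rangle$ (with $L_1$ the relabelling of $L$): the first because $\Gamma_2\setminus\{y_1,y_2,y_3\}\cong\Gamma_1\setminus x$, the second because $y_1y_2y_3$ has infinite order and commutes with $G(L_1)$. A support argument via Lemmas~\ref{lem:comm1} and~\ref{lem:comm2} gives $A'\cap B'=G(L_1)=\Phi(G(L))$, so $\Phi$ matches the two amalgam decompositions factor by factor.

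It then remains to prove the single hard point: that inside $G(\Gamma_2)$ the subgroups $A'$ and $B'$ generate their amalgamated product over $G(L_1)$, i.e.\ that no further collapse occurs. This is where I expect the real work to lie. One useful reduction is the retraction $\pi\colon G(\Gamma_2)\to A'$ that kills $y_1,y_2,y_3$: since $\pi\circ\Phi$ is exactly the retraction of $G(\Gamma_1)$ killing $x$, any element of $\ker\Phi$ lies in the normal closure of $x$, so it suffices to rule out nontrivial such elements in the kernel. I would finish by a normal-form argument for the amalgamated product above, tracking the alternating syllables through $G(\Gamma_2)$ and showing with Lemmas~\ref{lem:comm1} and~\ref{lem:comm2} that the $y_i$-cancellations illustrated above never propagate far enough to trivialise a nonempty alternating word. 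Verifying that no collapse occurs between the two factors is the main obstacle, and is precisely the step that has no analogue in the extension-graph setting.
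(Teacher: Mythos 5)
Your map $\Phi$ is exactly the paper's $\phi^*$ (your $y_1,y_2,y_3$ are its $x_1,x_2,x_3$ up to relabelling of the hexagon), your well-definedness check is fine, and your observation that $\Phi$ fails to be surviving at the new vertices is correct --- this is indeed why no one-line ``reduced words map to reduced words'' argument is available. The amalgam bookkeeping is also sound: the star splitting $G(\Gamma_1)=G(\Gamma_1\setminus x)*_{G(L)}(G(L)\times\langle x\rangle)$ is standard, $\Phi$ carries the two factors isomorphically onto $A'$ and $B'$, and $A'\cap B'=G(L_1)$ follows from a support argument. So the reduction is valid.

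The genuine gap is that the argument stops exactly where it has to start. The deferred claim --- that $A'$ and $B'=G(L_1)\times\langle y_1y_2y_3\rangle$ generate their amalgamated product over $G(L_1)$ inside $G(\Gamma_2)$, i.e.\ that no nonempty alternating word collapses --- is not a quotable fact; it is logically equivalent to the injectivity of $\Phi$, hence to the proposition itself. ``A normal-form argument tracking the alternating syllables, showing the $y_i$-cancellations never propagate far enough'' names the difficulty without resolving it: the danger is precisely that a syllable $g\in A'\setminus G(L_1)$ whose support meets only one or two of $a_1,b_1,c_1$ lets one or two of the letters $y_i^{\pm k}$ slide past it (as in $zb_1z^{-1}=(y_1y_2)\,b_1\,(y_1y_2)^{-1}$), and one must show that in a long alternating product these partial cancellations can never strip all three $y_i$ out of every $z$-syllable simultaneously. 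That analysis is the entire content of the paper's proof, organized via the ``surviving'' machinery instead of amalgams: Claim~1 shows the restricted map is $v_1$-surviving for every $v_1\notin\{x_2,x_3\}$ by composing with the retraction killing $x_2,x_3$; Claim~2 (the crux) shows that whenever $x\in\supp(w)$ at least one of $x_2,x_3$ survives in $\supp(\phi^*(w))$, by extracting a reduced decomposition $w=w_1\cdot x^k\cdot t\cdot w_2$ with $t\in\{b,c\}$ and observing that the surviving letter $t_1$, which lies in the link of the relevant $x_i$, blocks its cancellation; Claim~3 bootstraps this to $a_1$-surviving. Some argument at this level of detail (or an honest proof of your amalgam claim, which would require the same case analysis) must be supplied; as written, the proposal is a correct setup plus a declaration that the main step remains to be done.
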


\begin{figure}
\begin{tabular}{*5c}
$\begin{xy}/r1.4mm/:
(0,0) *{\bullet}; (10,0) *{\bullet} **@{-};
(0, -3) *{a};
(10, -3) *{x};
(17, 10) *{b};
(17,-10) *{c};
(10,0); (17,7) *{\bullet} **@{-};
(10,0); (17,-7) *{\bullet} **@{-};
(-5,-2); (0,0) **@{-};
(-5, 0); (0,0) **@{-};
(-5, 2); (0,0) **@{-};
(22,9); (17,7) **@{-};
(22,7); (17,7) **@{-};
(22,5); (17,7) **@{-};
(22,-9); (17,-7) **@{-};
(22,-7); (17,-7) **@{-};
(22,-5); (17,-7) **@{-};
\end{xy}$
&\qquad\qquad\qquad&
$\begin{xy}/r1.4mm/:
(0,0);
(9,2) *{\bullet} **@{-};
(17, 7) *{\bullet} **@{-};
(12,0) *{\bullet} **@{-};
(17,-7) *{\bullet} **@{-};
(9,-2) *{\bullet} **@{-};
(0,0) *{\bullet} **@{-};
(0, -3) *{a_1};
(17, 10) *{b_1};
(17,-10) *{c_1};
(14,  0) *!L{x_1};
(9, -4) *!U{x_2};
(9,  4) *!D{x_3};
(-5,-2); (0,0) **@{-};
(-5, 0); (0,0) **@{-};
(-5, 2); (0,0) **@{-};
(22,9); (17,7) **@{-};
(22,7); (17,7) **@{-};
(22,5); (17,7) **@{-};
(22,-9); (17,-7) **@{-};
(22,-7); (17,-7) **@{-};
(22,-5); (17,-7) **@{-};
\end{xy}$\\[3mm]
(a) $\Gamma_1$ &&
(b) $\Gamma_2$
\end{tabular}
\caption{$G(\Gamma_1)$ embeds into $G(\Gamma_2)$.}
\label{fig:deg3}
\end{figure}
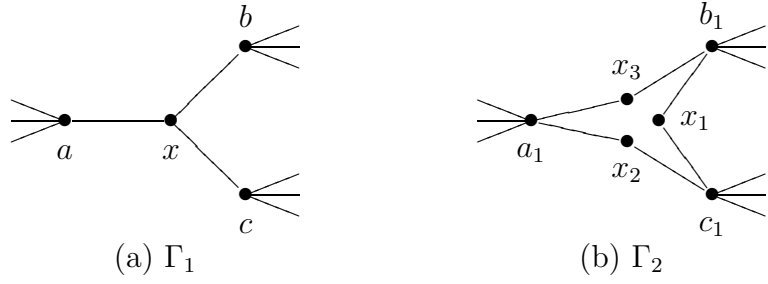

\begin{proof}
Let $\phi:\Gamma_2\to\Gamma_1$ be the graph homomorphism
defined by $\phi(v_1)=v$ for $v_1\not\in\{x_1,x_2,x_3\}$ and $\phi(x_i)=x$ for $i=1,2,3$.
Then $\phi^*:G(\Gamma_1)\to G(\Gamma_2)$ is the group homomorphism
such that $\phi^*(x)=x_1x_2x_3$ and $\phi^*(v)=v_1$ for $v\ne x$.
We will show that $\phi^*$ is injective.

\medskip
Let $\Gamma_1'=\Gamma_1\setminus a$, $\Gamma_2'=\Gamma_2\setminus a_1$
and $\phi_1=\phi|_{\Gamma_2'}:\Gamma_2'\to\Gamma_1'$.
See Figure~\ref{fig:deg3-C1}.

\medskip
\noindent{\textbf Claim 1.}
$\phi_1$ is $v_1$-surviving for all $v_1\in V(\Gamma_2')\setminus\{x_2,x_3\}$.
In particular, $\phi_1^*$ is injective.

\begin{proof}[Proof of Claim 1]
Let $\Gamma_2''=\Gamma_2'\setminus\{x_2,x_3\}$.
Let $\iota: \Gamma_2''\to\Gamma_2'$ be the inclusion.
Then $\iota^*:G(\Gamma_2')\to G(\Gamma_2'')$ is an epimorphism
such that $\iota^*(v_1)=v_1$ for all $v_1\ne x_2,x_3$ and $\iota^*(x_2)=\iota^*(x_3)=1$.

On the other hand, $\phi_1\circ\iota:\Gamma_2''\to \Gamma_1'$ is a graph isomorphism
sending $v_1$ to $v$ for each $v_1\in V(\Gamma_2'')$,
hence $\iota^*\circ\phi_1^*: G(\Gamma_1')\to G(\Gamma_2'')$ is
an isomorphism sending $w(x,b,c,\ldots)$ to $w(x_1,b_1,c_1,\ldots)$.
In particular, if $w(x,b,c,\ldots)$ is a reduced word in $G(\Gamma_1')$,
then $w(x_1,b_1,c_1,\ldots)$ is also a reduced word in $G(\Gamma_2'')$.

Assume that $\phi_1$ is not $v_1$-surviving for some $v_1\in V(\Gamma_2')\setminus\{x_2,x_3\}$.
Then there exists a nontrivial reduced word $w=w(x,b,c,\ldots)$ in $G(\Gamma'_1)$
such that the word
$$\phi_1^*(w)=w(x_1x_2x_3,b_1,c_1,\ldots)$$
has a cancellation of $v_1$.
This implies that
$$\iota^*(\phi_1^*(w))=w(x_1,b_1,c_1,\ldots)$$
also has a cancellation of $v_1$.
This is a contradiction because $w(x_1,b_1,c_1,\ldots)$ is a reduced word
in $G(\Gamma_2'')$.
Therefore
$\phi_1$ is $v_1$-surviving for all $v_1\in V(\Gamma_2')\setminus\{x_2,x_3\}$.

If $w\in G(\Gamma_1')$ is a nontrivial element, then
$v\in\supp(w)$ for some $v\in V(\Gamma_1')$,
hence $v_1\in\supp(\phi_1^*(w))$ because $\phi_1$ is $v_1$-surviving.
This implies that $\phi_1^*$ is injective.
\end{proof}

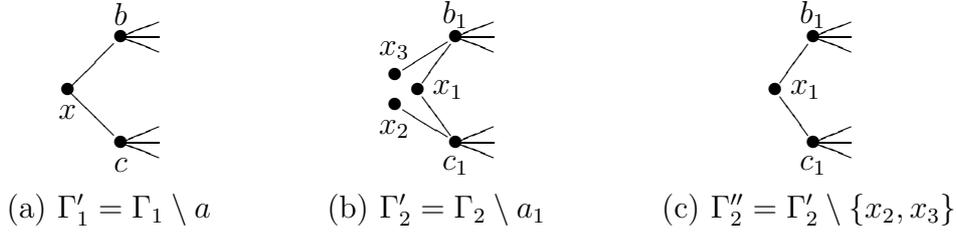
\begin{figure}
\begin{tabular}{*8c}
$\begin{xy}
(10,0) *{\bullet};
(10, -3) *{x};
(17, 10) *{b};
(17,-10) *{c};
(10,0); (17,7) *{\bullet} **@{-};
(10,0); (17,-7) *{\bullet} **@{-};
(22,9); (17,7) **@{-};
(22,7); (17,7) **@{-};
(22,5); (17,7) **@{-};
(22,-9); (17,-7) **@{-};
(22,-7); (17,-7) **@{-};
(22,-5); (17,-7) **@{-};
\end{xy}$
&\qquad\qquad&
$\begin{xy}
(9,2) *{\bullet}; (17, 7) *{\bullet} **@{-};
(12,0) *{\bullet} **@{-};
(17,-7) *{\bullet} **@{-};
(9,-2) *{\bullet} **@{-};
(17, 10) *{b_1};
(17,-10) *{c_1};
(14,  0) *!L{x_1};
(9, -4) *!U{x_2};
(9,  4) *!D{x_3};
(22,9); (17,7) **@{-};
(22,7); (17,7) **@{-};
(22,5); (17,7) **@{-};
(22,-9); (17,-7) **@{-};
(22,-7); (17,-7) **@{-};
(22,-5); (17,-7) **@{-};
\end{xy}$
&\qquad\qquad&
$\begin{xy}
(17, 7) *{\bullet};
(12,0) *{\bullet} **@{-};
(17,-7) *{\bullet} **@{-};
(17, 10) *{b_1};
(17,-10) *{c_1};
(14,  0) *!L{x_1};
(22,9); (17,7) **@{-};
(22,7); (17,7) **@{-};
(22,5); (17,7) **@{-};
(22,-9); (17,-7) **@{-};
(22,-7); (17,-7) **@{-};
(22,-5); (17,-7) **@{-};
\end{xy}$\\[10mm]
(a) $\Gamma_1'=\Gamma_1\setminus a$ &&
(b) $\Gamma_2'=\Gamma_2\setminus a_1$ &&
(c) $\Gamma_2''=\Gamma_2'\setminus \{x_2,x_3\}$
\end{tabular}
\caption{Graphs $\Gamma_1'$, $\Gamma_2'$ and $\Gamma_2''$.}
\label{fig:deg3-C1}
\end{figure}

\medskip
\noindent{\textbf Claim 2.}
For $w\in G(\Gamma_1')$,
if $x\in\supp(w)$ then
either $x_2$ or $x_3$ belongs to $\supp(\phi_1^*(w))$.

\begin{proof}[Proof of Claim 2]
We may assume that $w=w(x,b,c,\ldots)$ is expressed as a reduced word
as follows:
\begin{align}\label{eq:dec}
w
&=w_1 \cdot x^k \cdot t \cdot w_2
=w_1(b,c,\ldots)\cdot x^k \cdot t \cdot w_2(x,b,c,\ldots), 
\end{align}
where $x\not\in\supp(w_1)$, $k\ne 0$ and $t\in\{b,c\}$.
(The word $w_1$ is possibly empty.)
This decomposition is obtained as follows.

Decompose the element $w$ as $w=w_1u_1$ such that the word $w_1u_1$
is reduced and $x\not\in\supp(w_1)$.
We may assume that $w_1$ has the largest word length
among all such decompositions.
Then $u_1$ must start with $x$ or $x^{-1}$.
Let $u_1=x^k u_2$ for $k\ne 0$.
Take $|k|$ as large as possible. Then $u_2$ cannot start with $x$ or $x^{-1}$.
Moreover, $u_2$ cannot start with a letter $y$ such that $[y,x]=1$,
for otherwise we can make $w_1$ longer.
Therefore either $u_2=1$ or $u_2$ starts with $b$ or $c$.
If $u_2=1$, then $w_1(b_1,c_1,\ldots)x_1^kx_2^kx_3^k$
is a reduced word representing $\phi^*(w)=\phi^*(w_1(b,c,\ldots)x^k)$,
hence $x_2,x_3\in\supp(\phi^*(w))$.
If $u_2$ starts with $b$ or $c$, then we have
the desired decomposition of $w$ as Eq.\ (\ref{eq:dec}).

Without loss of generality, we may assume $t=c$.
Then
\begin{align*}
w &= w_1 \cdot x^k \cdot c\cdot w_2
  = w_1(b,c,\ldots) \cdot x^k \cdot c\cdot w_2(x,b,c,\ldots), \\
\phi_1^*(w)&=\phi_1^*(w_1) \cdot (x_1x_2x_3)^k \cdot c_1 \cdot \phi_1^*(w_2)\\
 & = w_1(b_1,c_1,\ldots) \cdot x_1^kx_2^k  \cdot c_1 \cdot x_3^k \cdot \phi_1^*(w_2).
\end{align*}

Let $w_3$ be a reduced word in $G(\Gamma_2')$ representing $x_3^k \cdot \phi_1^*(w_2)$.
Then
$$
\phi_1^*(w)=w_1(b_1,c_1,\ldots)x_1^kx_2^k c_1 w_3.
$$

Let $w'$ be the word $w_1(b_1,c_1,\ldots)x_1^kx_2^k c_1 w_3$ in the above.
Assume that $w'$ has a cancellation of $x_2$.
Since $w_3$ is reduced, the cancellation must occur between
$x_2^{\pm 1}$ in $x_2^k$ and $x_2^{\mp 1}$ in $w_3$, hence
$w'$ has a subword
$$
x_2^{\pm 1}c_1w_4x_2^{\mp1},
$$
where $w_4x_2^{\mp1}$ is an initial subword of $w_3$
and $\supp(c_1w_4)\cap \Lk(x_2)=\emptyset$.
Since $\phi_1$ is $c_1$-surviving by Claim 1, we have $c_1\in\supp(c_1w_4)$.
Since $c_1\in\Lk(x_2)$, this contradicts $\supp(c_1w_4)\cap \Lk(x_2)=\emptyset$.
Therefore $w'$ has no cancellation of $x_2$,
hence $x_2\in \supp(w')=\supp(\phi_1^*(w))$.
\end{proof}

\medskip
\noindent{\textbf Claim 3.}
$\phi$ is $a_1$-surviving.
\begin{proof}[Proof of Claim 3]
Assume that $\phi$ is not $a_1$-surviving.
Then there exists a reduced word $w$ in $G(\Gamma_1)$ such that
$\phi^*(w)$ has an innermost cancellation of $a_1$.
Hence the word $w$ has a subword
$$a^{\pm 1}w_1a^{\mp 1}$$
such that $w_1$ is a nontrivial reduced word in $G(\Gamma_1\setminus a)=G(\Gamma_1')$
with $\supp(\phi^*(w_1))\cap \Lk(a_1)=\emptyset$.
Notice that $\phi^*(w_1)=\phi_1^*(w_1)$ and
that $\supp(w_1)\cap \Lk(a)\ne\emptyset$ because $w$ is a reduced word.

\medskip
Assume that there exists $v\ne x$ in $\supp(w_1)\cap \Lk(a)$.
Then $v_1\in\supp(\phi^*(w_1))=\supp(\phi_1^*(w_1))$ because
$\phi_1$ is $v_1$-surviving by Claim 1
and $w_1$ is a word in $G(\Gamma_1')$ with $v\in\supp(w_1)$.
Since $v_1\in\supp(\phi^*(w_1))\cap \Lk(a_1)$, this contradicts
$\supp(\phi^*(w_1))\cap \Lk(a_1)=\emptyset$.

Therefore $\supp(w_1)\cap \Lk(a)=\{x\}$.
Since $\supp(\phi^*(w_1))\cap \Lk(a_1)=\emptyset$,
the set $\supp(\phi^*(w_1))$ contains neither $x_2$ nor $x_3$.
This is impossible by Claim 2. Therefore $\phi$ is $a_1$-surviving.
\end{proof}

Let $w$ be a nontrivial reduced word in $G(\Gamma_1)$.
If $a\in \supp(w)$, then $\phi^*(w)\ne 1$
because $\phi$ is $a_1$-surviving by Claim 3.
If $a\not\in\supp(w)$, then $w$ is a nontrivial reduced word in $G(\Gamma_1')$, hence
$\phi^*(w)=\phi_1^*(w)$ is nontrivial by Claim 1.
Therefore $\phi^*$ is injective.
\end{proof}

\begin{theorem}\label{thm:A}
$G(T_2)$ embeds into $G(P_{22})$.
\end{theorem}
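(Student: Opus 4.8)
The plan is to realise $G(T_2)\le G(P_{22})$ as a chain of embeddings, each step being either one of the two local moves established above (Propositions~\ref{prop:deg3} and~\ref{prop:deg1k}) or the external input from~\cite{LL16}. Concretely, I would first convert the tripod $T_2$ into a $12$-cycle by local moves, obtaining $G(T_2)\le G(C_{12})$, and then invoke the embedding $G(C_{12})\le G(P_{22})$ coming from~\cite{LL16}. Since group embeddings compose, chaining the two inequalities yields the theorem.

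For the first stage, label $T_2$ so that the centre $x$ has degree $3$ with $\Lk(x)=\{s_1,s_2,s_3\}$ and each leg is $x-s_i-t_i$ with $t_i$ a leaf. Proposition~\ref{prop:deg1k} requires a vertex carrying a degree-$1$ neighbour \emph{together with} two further neighbours, so it cannot be applied to $T_2$ directly, as every vertex adjacent to a leaf of $T_2$ has degree at most $2$. I would therefore begin with Proposition~\ref{prop:deg3} at the centre $x$: this replaces the central tripod by a $6$-cycle through $s_1,s_2,s_3$ and three new vertices, leaving the pendant edges $s_i-t_i$ untouched. Denoting by $\Lambda$ the resulting graph, which is a $6$-cycle carrying a pendant leaf at each of the three alternating vertices $s_1,s_2,s_3$, Proposition~\ref{prop:deg3} gives $G(T_2)\le G(\Lambda)$.

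In $\Lambda$ each $s_i$ has degree $3$, with exactly one degree-$1$ neighbour, namely its leaf $t_i$, and two further neighbours lying on the cycle; thus Proposition~\ref{prop:deg1k} applies at $s_i$ with $k=1$. Each such move deletes $t_i$ and replaces $s_i$ by a copy of $P_3$, i.e.\ enlarges the cycle by $3-1=2$ vertices while removing the pendant. The three moves are supported on disjoint arcs of the cycle, so they may be performed one after another without interfering, and after applying them at $s_1,s_2,s_3$ the $6$-cycle becomes $C_{6+2\cdot 3}=C_{12}$ with all pendants gone. Composing the inequalities produced by the three moves gives $G(\Lambda)\le G(C_{12})$, and hence $G(T_2)\le G(C_{12})$.

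Finally I would quote the result of~\cite{LL16} that $G(C_n)\le G(P_{2n-2})$; taking $n=12$ this is precisely $G(C_{12})\le G(P_{22})$, and composing with $G(T_2)\le G(C_{12})$ completes the argument. The substantive content lies entirely in Propositions~\ref{prop:deg3} and~\ref{prop:deg1k}, already proved, and in the cited cycle-to-path embedding of~\cite{LL16}; what remains is only bookkeeping, namely checking that the degree hypotheses of the two propositions are met at each stage and that the cycle grows to exactly $C_{12}$, so that the numerology $2\cdot 12-2=22$ delivers the stated target $P_{22}$. I expect this verification, rather than any new idea, to be the point that needs the most care.
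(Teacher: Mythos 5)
Your proposal is correct and follows exactly the paper's route: apply Proposition~\ref{prop:deg3} at the centre of $T_2$ to pass to the $6$-cycle with three pendant leaves, then apply Proposition~\ref{prop:deg1k} (with $k=1$) at each of the three degree-$3$ cycle vertices to reach $C_{12}$, and finally invoke $G(C_m)\le G(P_{2m-2})$ from~\cite{LL16} with $m=12$. The bookkeeping ($6+3\cdot 2=12$ and $2\cdot 12-2=22$) matches the paper's Figure~\ref{fig:T2}.
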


\begin{proof}
Let $\Gamma_1$ and $\Gamma_2$ be the graphs in Figure~\ref{fig:T2}(b,c).
Then $G(T_2)$ embeds into $G(\Gamma_1)$ by Proposition~\ref{prop:deg3}
and $G(\Gamma_1)$ embeds into $G(\Gamma_2)$ by Proposition~\ref{prop:deg1k}.
Notice that $\Gamma_2$ is the cycle $C_{12}$.
By Theorem 3.16 in~\cite{LL16}, $G(C_m)$ embeds into $G(P_{2m-2})$ for all $m\ge 3$.
In particular, $G(C_{12})$ embeds into $G(P_{22})$.
Consequently, $G(T_2)$ embeds into $G(P_{22})$.
\end{proof}

\begin{figure}
\begin{tabular}{*8c}
$\begin{xy}/r.9mm/:
(-10,0) *{\bullet}; (0,0) *{\bullet} **@{-}; (10,0) *{\bullet} **@{-};
(17,7) *{\bullet} **@{-}; (24,14) *{\bullet} **@{-};
(10,0); (17,-7) *{\bullet} **@{-}; (24,-14) *{\bullet} **@{-};
(0, -3) *{a};
(10, -3) *{x};
(-10, -3) *{p};
(19,-5) *{c};
(19, 5) *{b};
(26,12) *{q};
(26,-12) *{r};
\end{xy}$
&\quad&
$\begin{xy}/r.9mm/:
(-10,0) *{\bullet}; (0,0) *{\bullet} **@{-};
(9,2) *{\bullet} **@{-}; (17, 7) *{\bullet} **@{-}; (24,14) *{\bullet} **@{-};
(17,7); (12,0) *{\bullet} **@{-}; (17,-7) *{\bullet} **@{-}; (24,-14) *{\bullet} **@{-};
(17,-7); (9,-2) *{\bullet} **@{-};
(0,0) *{\bullet} **@{-};
(0, -3) *{a};
(19,-5) *{c};
(19.5, 6) *{b};
(14,  0) *!L{x_1};
(9, -4) *!U{x_2};
(9,  4) *!D{x_3};
(-10, -3) *{p};
(26,12) *{q};
(26,-12) *{r};
\end{xy}$
&\quad&
$\begin{xy}/r.9mm/:
(-10,0) *{\bullet}; (0,1.5) *{\bullet} **@{-};
(9,4) *{\bullet} **@{-}; (17, 9) *{\bullet} **@{-}; (25,14) *{\bullet} **@{-};
(19,7) *{\bullet} **@{-}; (13,0) *{\bullet} **@{-};
(19,-7) *{\bullet} **@{-}; (25,-14) *{\bullet} **@{-};
(17,-9) *{\bullet} **@{-}; (9,-4) *{\bullet} **@{-};
(0,-1.5) *{\bullet} **@{-}; (-10,0) *{\bullet} **@{-};
(0,  4.5) *{a_3};
(-10, -3) *{a_2};
(0, -4.5) *{a_1};
(15,12) *{b_1};
(28,12) *{b_2};
(22, 5) *{b_3};
(22,-5) *{c_1};
(28,-12) *{c_2};
(15,-11) *{c_3};
(17, 0) *{x_1};
(8.5,-7) *{x_2};
(8.5, 7.5) *{x_3};
\end{xy}$\\
(a) $T_2$ &&
(b) $\Gamma_1$ &&
(c) $\Gamma_2=C_{12}$
\end{tabular}
\caption{Graphs $T_2$, $\Gamma_1$ and $\Gamma_2$: $G(T_2)\le G(\Gamma_1)\le G(\Gamma_2)$.}
\label{fig:T2}
\end{figure}
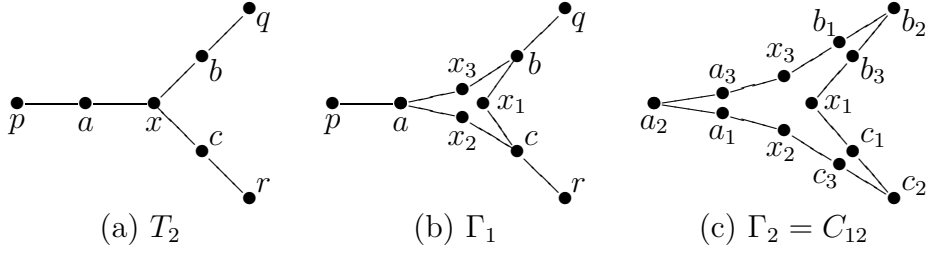

\section{Embeddability into extension graphs}

In this section, we show that $T_2\not\le P_n^E$ for any $n$,
and then give a characterization of trees that embeds into $P_n^E$ for some $n$
as an induced subgraph.
Let $\Gamma$ be a finite   graph.

\begin{lemma}\label{lem:ind}
Let $A\subset V(\Gamma)$ and $b^w\in V(\Gamma^E)$,
where $b\in V(\Gamma)$ and $w\in G(\Gamma)$.
Suppose that there is no edge between $b^w$ and the vertices in $A$.
Then there is an inner automorphism $\psi$ of $G(\Gamma)$ such that
$\psi(a)=a$ for all $a\in A$ and $\psi(b^w)=b$.
(See Figure~\ref{fig:push}.)
\end{lemma}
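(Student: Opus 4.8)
The plan is to exhibit the desired $\psi$ explicitly as conjugation by $w^{-1}$. First I would normalize the chosen representative so that $w^{-1}bw$ is a reduced word; this is harmless, since any conjugate of the single vertex $b$ can be rewritten as $w^{-1}bw$ with $w^{-1}bw$ reduced (delete from the conjugator the maximal portion that commutes with $b$), and this does not change the group element $b^w$. Then I define $\psi(x)=wxw^{-1}$ for $x\in G(\Gamma)$, which is an inner automorphism. The identity $\psi(b^w)=w(w^{-1}bw)w^{-1}=b$ is immediate, so the only thing left to verify is that $\psi$ fixes every $a\in A$, i.e.\ that $[a,w]=1$ for each such $a$.

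The core of the argument is to extract exactly this commutation from the hypothesis. Since there is no edge between $b^w$ and $a$ in $\Gamma^E$, the definition of $\Gamma^E$ gives $[b^w,a]=1$ in $G(\Gamma)$ for every $a\in A$. By Lemma~\ref{lem:comm1} this means $\Lk(a)\cap\supp(b^w)=\emptyset$. Because $w^{-1}bw$ is reduced, Lemma~\ref{lem:comm2}(i) gives $\supp(b^w)=\{b\}\cup\supp(w)$, so in particular $\Lk(a)\cap\supp(w)=\emptyset$. Applying Lemma~\ref{lem:comm1} once more, now to $a$ and $w$, yields $[a,w]=1$, hence $\psi(a)=waw^{-1}=a$. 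Since this holds for all $a\in A$, the inner automorphism $\psi$ has the two required properties.

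The step I expect to carry the weight is the passage from ``$b^w$ commutes with $a$'' to ``$w$ commutes with $a$'': a priori the hypothesis only constrains the conjugate $b^w$, but the support computation $\supp(b^w)=\{b\}\cup\supp(w)$ shows that disjointness from $\Lk(a)$ is inherited by $\supp(w)$, which is precisely what forces $w$ itself to commute with $a$. Once this is in hand the inner automorphism $\psi$ finishes the proof with no further computation, and the reduction of $w$ to a reduced conjugator is the only point needing a short preliminary remark.
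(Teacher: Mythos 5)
Your proposal is correct and follows essentially the same route as the paper's proof: reduce $w^{-1}bw$, use Lemmas~\ref{lem:comm1} and~\ref{lem:comm2}(i) to deduce $\Lk(a)\cap\supp(w)=\emptyset$ and hence $[a,w]=1$, then conjugate by $w$. The only difference is that you spell out the appeals to the two lemmas more explicitly than the paper does.
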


\begin{proof}
We may assume that $w^{-1}bw$ is reduced.
Let $a\in A$.
As $[b^w,a]=1$, $a$ commutes with each element of $\supp(b^w)=\{b\}\cup\supp(w)$,
hence $a$ commutes with $w$.
Let $\psi$ be the inner automorphism of $G(\Gamma)$
sending $g\in G(\Gamma)$ to $wgw^{-1}$, then
$\psi(a)=a$ for all $a\in A$ and $\psi(b^w)=b$.
\end{proof}

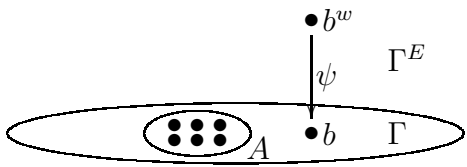
\begin{figure}
$$\begin{xy}
(20,15) *{\bullet} *+!L{b^w};
(20, 0) *{\bullet} *+!L{b};
(20,13); (20,2) **@{-}    ?>*@{>}   ?(0.5) *!/_2mm/{\psi};
(5, 0) *\xycircle(7,3){}; (13,-2) *{A};
(5,1) *{\bullet};  (5,-1) *{\bullet};
(2,1) *{\bullet};  (2,-1) *{\bullet};
(8,1) *{\bullet};  (8,-1) *{\bullet};
(10, 0) *\xycircle(30,4){};
( 30,0) *!L{\Gamma};
( 30,10) *!L{\Gamma^E};
\end{xy}$$
\caption{An inner automorphism $\psi$ sends $b^w$ into $\Gamma$ fixing $A$ pointwise.}
\label{fig:push}
\end{figure}

For a graph $\Lambda$, a subset $A$ of $V(\Lambda)$ is called
an \emph{independent set} if there is no edge between any two vertices in $A$.
Any independent subset $A$ of $V(\Gamma^E)$ is a finite set
because $G(\Gamma)$ contains a free abelian subgroup of rank $|A|$
by the Extension Graph Theorem
and because the maximum rank of a free abelian subgroup of $G(\Gamma)$
is the size of the largest independent subset of $V(\Gamma)$.

The following corollary shows that an independent subset of $V(\Gamma^E)$
is a conjugate of an independent subset of $V(\Gamma)$ by an element of $G(\Gamma)$.
Similar arguments were used in~\cite{KK13} and~\cite{CDK13}.

\begin{corollary}\label{lem:indep}
Let $A\subset V(\Gamma^E)$ be an independent set.
Then there is an inner automorphism $\psi$ of $G(\Gamma)$ such that
$\psi(A)\subset V(\Gamma)$.
\end{corollary}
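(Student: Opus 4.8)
The plan is to prove the corollary by induction on $|A|$, using Lemma~\ref{lem:ind} as the engine at each step. The base case $|A|\le 1$ is immediate: a single vertex $b^w\in V(\Gamma^E)$ can be sent into $V(\Gamma)$ by the inner automorphism $g\mapsto wgw^{-1}$ (this is exactly Lemma~\ref{lem:ind} with $A=\emptyset$), and the empty set needs no argument. Note also that by the remark preceding the statement, $A$ is automatically finite, so the induction is well-founded.

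For the inductive step, suppose the result holds for independent sets of size $k$, and let $A\subset V(\Gamma^E)$ be independent with $|A|=k+1$. First I would pick any element $b^w\in A$ and set $A'=A\setminus\{b^w\}$, an independent set of size $k$. Applying Lemma~\ref{lem:ind} with the roles reversed --- that is, using the vertex $b^w$ together with the set $A'$, since there is no edge between $b^w$ and any vertex of $A'$ by independence --- produces an inner automorphism $\psi_0$ of $G(\Gamma)$ fixing every element of $A'$ and sending $b^w$ into $V(\Gamma)$. Here one must check that Lemma~\ref{lem:ind} applies with $A'$ in the role of the set $A$ of that lemma: this requires only that the elements of $A'$ have no edge to $b^w$, which holds. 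After applying $\psi_0$, we have a new independent set $\psi_0(A)=A'\cup\{b\}$ with $b\in V(\Gamma)$ and $A'$ still independent of size $k$.

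Now I would apply the inductive hypothesis to $A'$ to obtain an inner automorphism $\psi_1$ with $\psi_1(A')\subset V(\Gamma)$, and set $\psi=\psi_1\circ\psi_0$. The remaining point --- and the one place where care is needed --- is to verify that $\psi$ also carries the distinguished vertex $b$ into $V(\Gamma)$, i.e.\ that applying $\psi_1$ does not spoil the fact that $b$ is already an honest vertex. The cleanest way to arrange this is to strengthen the inductive hypothesis: instead of merely asserting $\psi(A)\subset V(\Gamma)$, I would carry along the statement that $\psi$ can be chosen to fix any prescribed vertex of $A$ that already lies in $V(\Gamma)$ and is independent from the rest, which is precisely the form Lemma~\ref{lem:ind} delivers (it fixes $A$ pointwise while pushing one extra vertex in). Concretely, at the inductive step one applies Lemma~\ref{lem:ind} repeatedly, each time holding fixed the vertices already placed in $V(\Gamma)$ and pushing one more conjugate vertex down; independence guarantees the no-edge hypothesis of Lemma~\ref{lem:ind} at every stage.

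The main obstacle I anticipate is bookkeeping rather than conceptual: one must be sure that each successive inner automorphism preserves the vertices previously moved into $V(\Gamma)$, which is exactly what the ``$\psi(a)=a$ for all $a\in A$'' clause of Lemma~\ref{lem:ind} secures, provided the no-edge condition is maintained. Since independence of $A$ is preserved under any automorphism (inner automorphisms are graph automorphisms of $\Gamma^E$, as adjacency is defined by non-commutation in $G(\Gamma)$, a conjugation-invariant condition), the no-edge hypothesis persists throughout, and the induction closes.
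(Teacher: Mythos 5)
Your final formulation is exactly the paper's proof: order the vertices of $A$, and repeatedly apply Lemma~\ref{lem:ind} with the already-placed vertices as the fixed set, pushing one more conjugate vertex into $V(\Gamma)$ at each stage; independence (which is preserved under inner automorphisms, since adjacency in $\Gamma^E$ is defined by non-commutation) supplies the no-edge hypothesis every time, and the composite of the resulting inner automorphisms does the job.

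One caveat about the intermediate version of your inductive step. Lemma~\ref{lem:ind} requires the fixed set to be a subset of $V(\Gamma)$, not merely of $V(\Gamma^E)$: its proof invokes Lemma~\ref{lem:comm1} to conclude that each $a\in A$ commutes with all of $\supp(b^w)$ and hence with $w$, and that argument needs $a$ to be an honest vertex of $\Gamma$. So your first move --- applying the lemma with $A'=A\setminus\{b^w\}$ (whose elements are general conjugates) as the fixed set and concluding that ``this requires only that the elements of $A'$ have no edge to $b^w$'' --- is not licensed by the lemma as stated. The strengthened iterative scheme you settle on at the end, in which the fixed set always consists of vertices already placed in $V(\Gamma)$, avoids this entirely and is the correct (and the paper's) argument.
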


\begin{proof}
Let $A=\{v_1,\ldots,v_m\}\subset V(\Gamma^E)$.
By Lemma~\ref{lem:ind},
if $v_1,\ldots,v_{k-1}\in V(\Gamma)$ for $1\le k\le m$,
then there exists an inner automorhpism $\psi$ of $G(\Gamma)$ such that
$\psi(v_j)=v_j$ for $1\le j\le k-1$ and $\psi(v_k)\in V(\Gamma)$.
Since the composition of inner automorphisms is also an inner automorphism,
we are done by using induction on $|A|$.
\end{proof}

\begin{lemma}\label{lem:path}
Let $\{x,p,q\}$ be an independent subset of\/ $V(P_n)$ for some $n\ge 5$
such that $p$ lies between $x$ and $q$.
Let $b^w\in V(P_n^E)$ with $[b^w,p]=1$ in $G(P_n)$,
where $b\in V(P_n)$ and $w\in G(P_n)$.
Then either $[b^w,x]=1$ or $[b^w,q]=1$.
\end{lemma}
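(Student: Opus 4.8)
The plan is to exploit that, in a path, the support of a vertex-conjugate must be an interval of consecutive vertices, and then to locate that interval relative to $p$ using the commutation criterion of Lemma~\ref{lem:comm1}.

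First I would normalize: choose the representative so that $w^{-1}bw$ is reduced. By Lemma~\ref{lem:comm2} the set $S:=\supp(b^w)=\{b\}\cup\supp(w)$ spans a connected subgraph of $P_n$, and since every connected subgraph of a path is a subpath, $S$ is a set of consecutive vertices. Labelling the vertices of $P_n$ as $1,\dots,n$ along the path, I write the positions of $x,p,q$ as $\alpha<\beta<\gamma$ (legitimate because $p$ lies between $x$ and $q$); independence of $\{x,p,q\}$ in $P_n$ then gives the gaps $\beta-\alpha\ge 2$ and $\gamma-\beta\ge 2$.

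Next I would feed in the hypothesis $[b^w,p]=1$. By Lemma~\ref{lem:comm1} this is equivalent to $\Lk(p)\cap S=\emptyset$, i.e.\ the interval $S$ contains neither $\beta-1$ nor $\beta+1$. The decisive observation is that an interval omitting both immediate neighbours of $\beta$ cannot straddle $\beta$: otherwise $\min S\le\beta-1$ and $\max S\ge\beta+1$, so the interval would contain $\beta-1$. Hence $\max S\le\beta$ or $\min S\ge\beta$. In the first case every element of $S$ is at most $\beta\le\gamma-2$, so $S$ avoids $\Lk(q)=\{\gamma-1,\gamma+1\}$ and $[b^w,q]=1$; in the second case every element of $S$ is at least $\beta\ge\alpha+2$, so $S$ avoids $\Lk(x)=\{\alpha-1,\alpha+1\}$ and $[b^w,x]=1$. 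Either way the desired conclusion follows from Lemma~\ref{lem:comm1}.

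The only step needing care—and the one where both hypotheses are used essentially—is this ``cannot straddle'' argument: it relies on the connectedness of $\supp(b^w)$ (so that $S$ is genuinely an interval rather than an arbitrary vertex set) together with the independence gaps of size at least $2$, which are exactly what let commutation with $p$ be transported across $p$ to the far vertex $x$ or $q$. I do not anticipate any obstacle beyond checking the handful of integer inequalities in the two cases.
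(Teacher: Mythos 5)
Your proof is correct and follows essentially the same route as the paper's: both combine Lemma~\ref{lem:comm1} (commutation with $p$ forces $\supp(b^w)\cap\Lk(p)=\emptyset$) with Lemma~\ref{lem:comm2} (connectedness of $\supp(b^w)$) to conclude that the support lies entirely on one side of $p$. The paper phrases the ``cannot straddle'' step via the components of $P_n\setminus\Lk(p)$ rather than explicit integer coordinates, but the argument is the same.
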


\begin{proof}
The graph $P_n\setminus \Lk(p)$ has three path components,
say $P_n\setminus \Lk(p)=\Gamma_1\cup\{p\}\cup\Gamma_2$,
where $\Gamma_1$ (resp.\ $\Gamma_2$) is the path component
containing $x$ (resp.\ $q$).
Note that two vertices from distinct components commute with each other
in $G(P_n)$.

As $[b^w,p]=1$, one has $\supp(b^w)\cap \Lk(p)=\emptyset$ by Lemma~\ref{lem:comm1}.
Since $\supp(b^w)$ spans a connected subgraph of $P_n$ by Lemma~\ref{lem:comm2},
$\supp(b^w)$ is contained in one of $\Gamma_1$, $\Gamma_2$ and $\{p\}$.
Therefore $b^w$ commutes with either $x$ or $q$.
\end{proof}

\begin{theorem}\label{thm:B}
The tripod $T_2$ does not embed into $P_n^E$ as an induced subgraph for any $n$.
\end{theorem}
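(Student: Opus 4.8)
The plan is to argue by contradiction, straightening a well-chosen independent set with Corollary~\ref{lem:indep} and then invoking Lemma~\ref{lem:path} to force a commutation forbidden by the tripod. Suppose $T_2\le P_n^E$ for some $n$, and label the seven image vertices $x,a,b,c,p,q,r\in V(P_n^E)$ as in Figure~\ref{fig:T2}(a): $x$ is the center, $a,b,c$ the midpoints, and $p,q,r$ the leaves, with leg-edges $\{a,p\},\{b,q\},\{c,r\}$. Under the opposite convention, adjacency in $P_n^E$ means non-commutation in $G(P_n)$, so the embedding encodes exactly the following pattern: $x$ commutes with $p,q,r$ but with none of $a,b,c$; and each midpoint commutes with the two leaves other than its own and with the other two midpoints, but with neither $x$ nor its own leaf.

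First I would note that $\{x,p,q,r\}$ is an independent subset of $V(P_n^E)$, as these four vertices pairwise commute. By Corollary~\ref{lem:indep} I may post-compose the embedding with an inner automorphism of $G(P_n)$ and thereby assume $x,p,q,r\in V(P_n)$; an inner automorphism preserves every commutation relation, so the full adjacency pattern above is unchanged. Now $x,p,q,r$ are four distinct, pairwise non-adjacent vertices of the path $P_n$, which forces $n\ge 7$ and in particular $n\ge 5$, the hypothesis required by Lemma~\ref{lem:path}.

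Next I would use the linear order of these four vertices along $P_n$. The vertex $x$ splits the path into two sides, and since there are three leaves, by the pigeonhole principle at least two of $p,q,r$ lie on one side of $x$; let $L$ be the leaf farthest from $x$ on that side and $L'$ any other leaf on the same side, so that $L'$ lies strictly between $x$ and $L$. Writing $N_L\in\{a,b,c\}$ for the midpoint adjacent to $L$, the pattern above gives $[N_L,L']=1$ while $[N_L,x]\ne 1$ and $[N_L,L]\ne 1$. Applying Lemma~\ref{lem:path} to the independent triple $\{x,L',L\}\subset V(P_n)$ with middle vertex $L'$, the relation $[N_L,L']=1$ forces $[N_L,x]=1$ or $[N_L,L]=1$, a contradiction. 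Hence no such embedding exists, and $T_2\not\le P_n^E$ for every $n$.

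The main obstacle, such as it is, lies in choosing the right independent set to straighten. Straightening the three leaves alone, or the three midpoints, does not produce a configuration to which Lemma~\ref{lem:path} applies; the decisive point is to include the center $x$ together with the leaves, so that $x$ shares a side of the path with two leaves and one leg-midpoint is trapped into commuting with a vertex lying between two of its non-neighbours. Once that choice is made, the remaining work---verifying the commutation pattern imposed by $T_2$ and checking the hypotheses of Corollary~\ref{lem:indep} and Lemma~\ref{lem:path}---is routine.
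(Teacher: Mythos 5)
Your proof is correct and follows essentially the same route as the paper: straighten the independent set $\{x,p,q,r\}$ with Corollary~\ref{lem:indep}, place two leaves on one side of $x$ by pigeonhole, and apply Lemma~\ref{lem:path} to the midpoint attached to the farther of those two leaves. The only cosmetic differences are that you name the far leaf $L$ and its midpoint $N_L$ where the paper fixes the labels $q$ and $b$ without loss of generality, and that you explicitly record the (harmless) verification that $n\ge 5$.
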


\begin{proof}
Assume $T_2\le P_n^E$ for some $n$.
Let $\phi:T_2\to P_n^E$ be the embedding.
Label the vertices of $T_2$ as in Figure~\ref{fig:T2}(a).
Let $v_x, v_p, v_q, v_r$ denote the images of $x,p,q,r$ under $\phi$, respectively.
Since $\{x,p,q,r\}$ is an independent subset of $V(T_2)$,
$\{v_x,v_p,v_q,v_r\}$ is also an independent subset of $V(P_n^E)$.
By Corollary~\ref{lem:indep},
we may assume that $\{v_x,v_p,v_q,v_r\}\subset V(P_n)$.

Since $P_n$ is a path graph, at least two of $v_p, v_q, v_r$ are in the same
component of $P_n\setminus v_x$.
Without loss of generality, we may assume that $v_p$ and $v_q$ are in the same component.
Moreover, we may assume that $v_x, v_p, v_q$ lie in $P_n$ in this order.

For the vertex $b$ of $T_2$ in Figure~\ref{fig:T2}(a),
let $\phi(b)=v_b^{w_b}$, where $v_b\in V(P_n)$ and $w_b\in G(P_n)$.
Since $[b,p]=1$, we have $[v_b^{w_b},v_p]=1$.
By Lemma~\ref{lem:path}, either $[v_b^{w_b},v_x]=1$ or $[v_b^{w_b},v_q]=1$.
This contradicts that $[b,x]\ne 1$ and $[b,q]\ne 1$.
Therefore $T_2\not\le P_n^E$.
\end{proof}

The proof of Theorem~\ref{thm:B} uses only the following properties
of the tripod $T_2$: in Figure~\ref{fig:T2}(a),
(i) $\{x,p,q,r\}$ is an independent subset of $V(T_2)$;
(ii) the vertex $a$ (resp.\ $b$, $c$) is adjacent to neither $q$ nor $r$
(resp.\ neither $p$ nor $r$, neither $p$ nor $q$).
Therefore, by the same proof of Theorem~\ref{thm:B},
none of the following graphs
embeds into $P_n^E$ as an induced subgraph for any $n$.
$$
\begin{xy}/r.6mm/:
(-10,0) *{\bullet}; (0,0) *{\bullet} **@{-}; (10,0) *{\bullet} **@{-};
(17,7) *{\bullet} **@{-}; (24,14) *{\bullet} **@{-};
(10,0); (17,-7) *{\bullet} **@{-}; (24,-14) *{\bullet} **@{-};
(0,  -4) *{a};
(10, -4) *{x};
(-10,-4) *{p};
(20, -5) *{c};
(20,  5) *{b};
(28, 12) *{q};
(28,-12) *{r};
(17,7); (17,-7) **@{-};
\end{xy}
\qquad
\begin{xy}/r.6mm/:
(-10,0) *{\bullet}; (0,0) *{\bullet} **@{-}; (10,0) *{\bullet} **@{-};
(17,7) *{\bullet} **@{-}; (24,14) *{\bullet} **@{-};
(10,0); (17,-7) *{\bullet} **@{-}; (24,-14) *{\bullet} **@{-};
(0,  -4) *{a};
(10, -4) *{x};
(-10,-4) *{p};
(20, -5) *{c};
(20,  5) *{b};
(28, 12) *{q};
(28,-12) *{r};
(0,0); (17,7) **@{-}; (17,-7) **@{-};
\end{xy}
\qquad
\begin{xy}/r.6mm/:
(-10,0) *{\bullet}; (0,0) *{\bullet} **@{-}; (10,0) *{\bullet} **@{-};
(17,7) *{\bullet} **@{-}; (24,14) *{\bullet} **@{-};
(10,0); (17,-7) *{\bullet} **@{-}; (24,-14) *{\bullet} **@{-};
(0,  -4) *{a};
(10, -4) *{x};
(-10,-4) *{p};
(20, -5) *{c};
(20,  5) *{b};
(28, 12) *{q};
(28,-12) *{r};
(0,0); (17,7) **@{-}; (17,-7) **@{-}; (0,0) **@{-};
\end{xy}
$$

By the above theorem together with Theorem~\ref{thm:A},
we obtain the following.

\begin{corollary}\label{colo:main}
There exist a finite tree $T$ and a finite path graph $P$ such that
$G(T)$ embeds into $G(P)$ but $T$ does not embed into
$P^E$ as an induced subgraph.
\end{corollary}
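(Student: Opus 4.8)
The plan is to assemble Corollary~\ref{colo:main} directly from the two main theorems already in hand, so the corollary is essentially a bookkeeping step rather than a new argument. The statement asserts the existence of a tree $T$ and a path graph $P$ witnessing the failure of the converse of the Extension Graph Theorem; the natural candidates are $T=T_2$ and $P=P_{22}$, since these are precisely the graphs for which Theorems~\ref{thm:A} and~\ref{thm:B} were proved.

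First I would recall that Theorem~\ref{thm:A} gives $G(T_2)\le G(P_{22})$, which furnishes the positive half of the statement: the right-angled Artin group on the tree embeds into the right-angled Artin group on the path graph. This is the embedding $G(T)\le G(P)$ with $T=T_2$ and $P=P_{22}$.

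Next I would invoke Theorem~\ref{thm:B}, which states that $T_2$ does not embed into $P_n^E$ as an induced subgraph for \emph{any} $n$. In particular, taking $n=22$, we conclude $T_2\not\le P_{22}^E$. This supplies the negative half: the tree $T=T_2$ fails to embed into $P^E=P_{22}^E$ as an induced subgraph. Combining the two observations, the pair $(T_2,P_{22})$ satisfies both $G(T)\le G(P)$ and $T\not\le P^E$, which is exactly the assertion of the corollary.

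I do not anticipate any genuine obstacle here, precisely because all the substantive work has been front-loaded into the two theorems. The only point requiring mild care is to make sure the conventions line up: the corollary is phrased in terms of $G(\cdot)$ and $\cdot^E$, the opposite convention adopted in the paper, and both Theorems~\ref{thm:A} and~\ref{thm:B} are already stated in that same convention, so there is nothing to translate. (Should one wish to restate the Main Theorem in the original $A(\bar\Gamma)$ and $\bar\Gamma^e$ language, one would apply the equivalences $G(\Gamma)=A(\bar\Gamma)$ and $\Gamma^E=\overline{\bar\Gamma^e}$ together with facts (i) and (ii) from the Opposite Convention subsection, but for the corollary as stated this is unnecessary.) Thus the proof is simply: take $T=T_2$ and $P=P_{22}$, apply Theorem~\ref{thm:A} for the embedding and Theorem~\ref{thm:B} for the non-embeddability, and conclude.
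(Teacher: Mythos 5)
Your proposal is correct and matches the paper's own proof exactly: the corollary is obtained by taking $T=T_2$ and $P=P_{22}$, citing Theorem~\ref{thm:A} for the embedding $G(T_2)\le G(P_{22})$ and Theorem~\ref{thm:B} (with $n=22$) for the non-embeddability $T_2\not\le P_{22}^E$. There is nothing to add.
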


\begin{definition}
A finite tree $T$ is called a \emph{hairy path graph}
if $T$ contains a path graph $P_m$ as an induced subgraph
such that each vertex of $V(T)\setminus V(P_m)$ is adjacent to a vertex of $P_m$
as in Figure~\ref{fig:hairy}.
\end{definition}

\begin{figure}
$$\begin{xy}/r1.2mm/:
(-30,0) *{\bullet};  (30,0) *{\bullet} **@{-};
(-20,0) *{\bullet};
(-10,0) *{\bullet};
(  0,0) *{\bullet};
( 10,0) *{\bullet};
( 20,0) *{\bullet};
(-20,0); (-20,10) *{\bullet} **@{-};
(-20,0); (-22,10) *{\bullet} **@{-};
(-20,0); (-18,10) *{\bullet} **@{-};
(-10,0); (-13,10) *{\bullet} **@{-};
(-10,0); (-11,10) *{\bullet} **@{-};
(-10,0); ( -9,10) *{\bullet} **@{-};
(-10,0); ( -7,10) *{\bullet} **@{-};
( 0,0); (0,10) *{\bullet} **@{-};
( 0,0); (-2,10) *{\bullet} **@{-};
( 0,0); (2,10) *{\bullet} **@{-};
( 10,0); (  9,10) *{\bullet} **@{-};
( 10,0); ( 11,10) *{\bullet} **@{-};
( 20,0); ( 17,10) *{\bullet} **@{-};
( 20,0); ( 19,10) *{\bullet} **@{-};
( 20,0); ( 21,10) *{\bullet} **@{-};
( 20,0); ( 23,10) *{\bullet} **@{-};
(-30,-3) *{v_1};
(-20,-3) *{v_2};
(-10,-3) *{v_3};
(  5,-3) *{\cdots};
( 20,-3) *{v_{m-1}};
( 30,-3) *{v_m};
\end{xy}$$
\caption{Hairy path graph}
\label{fig:hairy}
\end{figure}
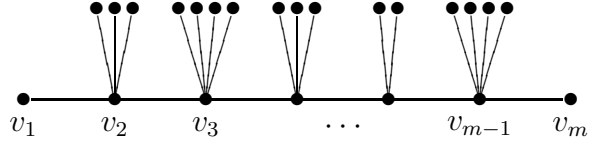

Let $T$ be the hairy path graph in Figure~\ref{fig:ex1}(a).
Label the vertices of the path graph $P_{10}$ as in Figure~\ref{fig:ex1}(b).
By the same argument in the proof of Proposition~\ref{prop:deg1k},
the subgraph of $P_{10}^E$ induced by
$S=\{x_1^{x_2x_3}, y_1^{y_2\cdots y_5}, b, c, x_2, y_2, y_4\}$
is isomorphic to $T$ as in Figure~\ref{fig:ex1}(c).
Therefore $T$ embeds into $P_{10}^E$ as an induced subgraph.

Using Theorem~\ref{thm:B},
we obtain a characterization of trees
that embeds into $P_n^E$  as an induced subgraph.

\begin{figure}
\begin{tabular}{*9c}
$\begin{xy} /r.8mm/:
(0,0) *{\bullet};
(10,0) *{\bullet} **@{-};
(20,0) *{\bullet} **@{-};
(30,0) *{\bullet} **@{-};
(10, 0); (10,7) *{\bullet} **@{-};
(20, 0); (18,7) *{\bullet} **@{-};
(20, 0); (22,7) *{\bullet} **@{-};
(0, -3) *{b};
(10, -3) *{x};
(20, -3) *{y};
(30, -3) *{c};
\end{xy}$
&\qquad&
$\begin{xy}/r.9mm/:
(0,0) *{\bullet};
(7,0) *{\bullet} **@{-};
(11,0) *{\bullet} **@{-};
(15,0) *{\bullet} **@{-};
(22,0) *{\bullet} **@{-};
(25,0) *{\bullet} **@{-};
(28,0) *{\bullet} **@{-};
(31,0) *{\bullet} **@{-};
(34,0) *{\bullet} **@{-};
(41,0) *{\bullet} **@{-};
(0, -3) *{b};
(7, -3) *{x_1};
(11, -3) *{x_2};
(15, -3) *{x_3};
(22, -3) *{y_1};
(28, -3) *{\cdots};
(34, -3) *{y_5};
(41, -3) *{c};
\end{xy}$
&\qquad&
$\begin{xy}/r.9mm/:
(0,0) *{\bullet};
(11,7) *{\bullet}**@{-};
(11,0) *{\bullet}**@{-};
(28,7)*{\bullet}; (11,7) *{\bullet}**@{-};
(28,7); (25,0) *{\bullet}**@{-};
(28,7); (31,0) *{\bullet}**@{-};
(28,7); (41,0) *{\bullet}**@{-};
(0, -3) *{b};
(11, -3) *{x_2};
(25, -3) *{y_2};
(31, -3) *{y_4};
(41, -3) *{c};
(11, 10) *!L{x_1^{x_2x_3}};
(28, 10) *!L{y_1^{y_2 y_3 y_4 y_5}};
\end{xy}$\\[3mm]
(a) $T$ &&
(b) $P_{10}$ &&
(c) An induced subgraph of $P_{10}^E$
\end{tabular}
\caption{$T$ embeds into $P_{10}^E$ as an induced subgraph.}\label{fig:ex1}
\end{figure}
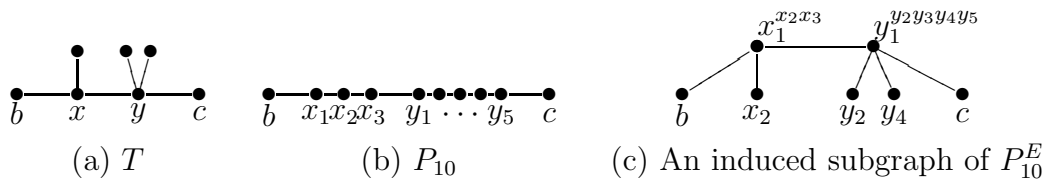

\begin{theorem}\label{thm:hairy}
For a finite tree $T$, the following are equivalent.
\begin{enumerate}
\item[(i)] $T\le P_n^E$ for some $n$.
\item[(ii)] $T_2\not\le T$.
\item[(iii)] $T$ is a hairy path graph.
\end{enumerate}
\end{theorem}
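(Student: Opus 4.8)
The plan is to prove the three implications $(i)\Rightarrow(ii)\Rightarrow(iii)\Rightarrow(i)$, using the tools already built in the excerpt.

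\medskip
\noindent\textbf{$(i)\Rightarrow(ii)$.}
This is the easiest direction and follows directly from Theorem~\ref{thm:B}. Suppose $T_2\le T$; that is, $T$ contains $T_2$ as an induced subgraph. Since induced-subgraph embeddings compose (if $\Gamma_1\le\Gamma_2$ and $\Gamma_2\le\Gamma_3$ then $\Gamma_1\le\Gamma_3$), and since $T\le P_n^E$ by hypothesis~(i), I would obtain $T_2\le P_n^E$. But Theorem~\ref{thm:B} says $T_2\not\le P_n^E$ for any $n$, a contradiction. Hence $T_2\not\le T$, proving~(ii). The only small point to verify is that the induced-subgraph relation is transitive, which is immediate from the definition.

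\medskip
\noindent\textbf{$(ii)\Rightarrow(iii)$.}
This is the combinatorial heart of the argument and where I expect the main work to lie. I would prove the contrapositive or argue directly: assuming $T$ is a tree with no induced $T_2$, I must show $T$ is a hairy path graph, i.e.\ $T$ has an induced path $P_m=v_1v_2\cdots v_m$ such that every vertex off this path is adjacent to some $v_i$. The natural candidate for $P_m$ is a \emph{diameter path}, a longest geodesic in $T$. The key structural claim is that in a $T_2$-free tree, every vertex lies within distance $1$ of the diameter path. Suppose not: some vertex $u$ is at distance $\ge 2$ from the diameter path $P$. Let $w$ be the vertex of $P$ closest to $u$ (unique in a tree), so the geodesic from $u$ to $w$ has length $\ge 2$; this already gives two vertices hanging off $w$ on the $u$-side. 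Since $w$ lies on a diameter, it has two long arms along $P$ in both directions. I would then locate the center $x$ of a $T_2$: a vertex adjacent to three vertices that each start a path of length $2$ leading away in three independent directions (two along $P$, one toward $u$), and show these three length-$2$ branches are pairwise non-adjacent and induce exactly $T_2=T_{2,2,2}$. The main obstacle is bookkeeping: I must carefully choose $x$ (perhaps $w$ itself, or the vertex on the $u$-branch adjacent to $w$) and check that the diameter really is long enough on both sides to supply the required length-$2$ arms, handling boundary cases where $w$ is near an endpoint of $P$ (here maximality of the diameter is exactly what guarantees the arms cannot be too short, since a short arm plus the $u$-branch would produce a longer geodesic, contradicting that $P$ is a diameter).

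\medskip
\noindent\textbf{$(iii)\Rightarrow(i)$.}
Here I would generalize the explicit construction already illustrated for the example tree in Figure~\ref{fig:ex1} and for Proposition~\ref{prop:deg1k}. Given a hairy path graph $T$ with spine $P_m=v_1\cdots v_m$ and hairs attached, I construct a single long path graph $P_n$ and realize $T$ as an induced subgraph of $P_n^E$. The idea is to subdivide: replace each spine vertex $v_i$ that carries $k_i$ hairs by a subpath long enough (roughly $2k_i+1$ vertices, as in Proposition~\ref{prop:deg1k}) so that the conjugate $x_1^{x_2x_3\cdots}$ of its first vertex, together with selected even-indexed subpath vertices, reproduces $v_i$ and its hairs; the spine adjacencies are preserved by laying these blocks consecutively. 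The vertices of the target induced subgraph of $P_n^E$ are: for each degree-$\ge1$ spine vertex a suitable conjugate $x_1^{w}$ playing the role of the hub, together with the even-position vertices serving as the hairs, exactly as in the set $S$ in the Figure~\ref{fig:ex1}(c) computation. Using Lemmas~\ref{lem:comm1} and~\ref{lem:comm2} to compute which conjugates commute, I verify that the induced subgraph on this vertex set is isomorphic to $T$. The obstacle here is purely notational: assembling the per-vertex blocks into one path and confirming that no unwanted edges appear between hubs or hairs belonging to different spine vertices, which follows because supports of the chosen conjugates are confined to disjoint consecutive blocks of $P_n$ and hence commute whenever the corresponding spine vertices are non-adjacent.
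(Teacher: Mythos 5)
Your proposal is correct and follows essentially the same route as the paper: (i)$\Rightarrow$(ii) by Theorem~\ref{thm:B} together with transitivity of the induced-subgraph relation, (ii)$\Rightarrow$(iii) by taking a longest induced path and extracting an induced $T_2$ centered at the nearest path vertex whenever some vertex lies at distance $\ge 2$ from the path (with maximality of the path handling the near-endpoint cases), and (iii)$\Rightarrow$(i) by assembling the Proposition~\ref{prop:deg1k} construction block by block along the spine as in Figure~\ref{fig:ex1}. I see no gaps.
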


\begin{proof}
(i) $\Rightarrow$ (ii)\ \
It follows from Theorem~\ref{thm:B}.

\smallskip
(ii) $\Rightarrow$ (iii)\ \
Let $P_m$ be a longest path graph among induced subgraphs of $T$.
Let $V(P_m)=\{v_1,\ldots,v_m\}$ such that $v_i$ and $v_{i+1}$ are adjacent for $i=1,\ldots,m-1$.
Let $v\in V(T)\setminus V(P_m)$.
We will show that $v$ is adjacent to $v_i$ for some $2\le i\le m-1$,
hence $T$ is a hairy path graph.

Since $T$ is a tree, there exists a unique $v_i\in V(P_m)$ that is nearest to $v$.
Since $P_m$ is longest, $i\not\in\{1,m\}$.
If $i\in\{2,m-1\}$, then $v$ must be adjacent to $v_i$ because $P_m$ is longest.
If $3\le i\le m-2$, then $v$ must be adjacent to $v_i$
because $T_2\not\le T$.

\smallskip
(iii) $\Rightarrow$ (i)\ \
Let $T$ be a hairy path graph containing $P_m$
as a longest induced path subgraph as in Figure~\ref{fig:hairy}.
Suppose each $v_i\in V(P_m)$ for $i=2,\ldots,m-1$
is joined to $k_i$ vertices in $V(T)\setminus V(P_m)$.
Applying the argument in Proposition~\ref{prop:deg1k}
(as in the discussion with the graphs in Figure~\ref{fig:ex1} where $m=4$, $k_2=1$ and $k_3=2$),
we can see that $T\le P_{m+2k}^E$, where $k=k_2+\cdots+k_{m-1}$.
\end{proof}

\section*{Acknowledgements}
The first author was partially supported by NRF-2015R1C1A2A01051589.
The second author was partially supported by NRF-2015R1D1A1A01056723.
This paper was written as part of Konkuk University's research support program for its faculty on sabbatical leave in 2017.


\begin{thebibliography}{MM}
\bibitem[Cas15]{Cas15}M.~Casals-Ruiz,
Embeddability and universal equivalence of partially commutative groups,
Int. Math. Res. Not. (2015) 13575--13622.

\bibitem[CDK13]{CDK13}
M.~Casals-Ruiz, A.~Duncan and I.~Kazachkov,
Embeddings between partially commutative groups: two counterexamples,
J.\ Algebra 390 (2013) 87--99.

\bibitem[Cha07]{Cha07}
R.~Charney,
An introduction to right-angled Artin groups,
Geom.\ Dedicata 125 (2007) 141--158.


\bibitem[CSS08]{CSS08}
J.~Crisp, M.~Sageev and M.~Sapir,
Surface subgroups of right-angled Artin groups,
Internat.\ J.\ Algebra Comput.\ 18 (2008) 443--391.

\bibitem[Dro87]{Dro87}
C.~Droms,
Isomorphisms of graph groups,
Proc.\ Amer.\ Math.\ Soc.\ 100 (1987) 407--408.

\bibitem[Kat16]{Kat16}
T.\ Katayama,
Right-angled Artin groups and full subgraphs of graphs,
arXiv:1612.01732,
to appear in Journal of Knot Theory and Its Ramifications.

\bibitem[Kim08]{Kim08}
S.-h.~Kim, Co-contractions of graphs and right-angled Artin groups,
Algebr.\ Geom.\ Topol.\ 8 (2008) 849-–868.

\bibitem[KK13]{KK13}
S.-h.~Kim and T.~Koberda,
Embedability between right-angled Artin groups,
Geom.\ Topol.\ 17 (2013) 493--530.

\bibitem[KK15]{KK15}
S.-h.~Kim and T.~Koberda,
Anti-trees and right-angled Artin subgroups of braid group,
Geom.\ Topol.\ 19 (2015) 3289--3306.

\bibitem[LL16]{LL16}
E.-K.\ Lee, S.-J.\ Lee,
Path lifting properties and embedding between RAAGs,
J.\ Algebra 448 (2016) 575--594.
\end{thebibliography}
\end{document}